\newtheorem{proposition}{Proposition}[section]
\newtheorem{theorem}[proposition]{Theorem}
\newtheorem{corollary}[proposition]{Corollary}
\newtheorem{lemma}[proposition]{Lemma}
\newtheorem{assumptions}[proposition]{Assumptions}
\theoremstyle{definition}
\theoremstyle{remark}
\newtheorem{remark}[proposition]{Remark}
\numberwithin{equation}{section}
\newcommand{\eps}{\varepsilon}
\newcommand{\N}{{\mathbb{N}}}
\newcommand{\R}{{\mathbb{R}}}
\DeclareMathOperator{\diverg}{div}
\title{Unique continuation for nonlinear variational problems}
\author{Lorenzo Ferreri, Luca Spolaor, Bozhidar Velichkov}
\begin{document}
\maketitle

\begin{abstract}
This paper is dedicated to the unique continuation properties of the solutions to nonlinear variational problems. Our analysis covers the case of nonlinear autonomous functionals depending on the gradient, as well as more general double phase and multiphase functionals with $(2,q)$-growth in the gradient. We show that all these cases fall in a class of nonlinear functionals for which we are able to prove weak and strong unique continuation via the almost-monotonicity of Almgren's frequency formula. As a consequence, we obtain estimates on the dimension of the set of points at which both the solution and its gradient vanish. 
        \\%
\end{abstract} 
\noindent
{\footnotesize \textbf{AMS-Subject Classification}}. 
{\footnotesize 35B40, 35J60, 
}\\
{\footnotesize \textbf{Keywords}}. 
{\footnotesize Unique continuation, nonlinear variational problems, Whitney decomposition, Almgren's frequency function
}

\tableofcontents

\section{Introduction}
Unique continuation type results for elliptic operator have been a central theme of investigation in PDEs for many years, we refer for instance to  \cite{Ar,JeKe, GarofaloLin1987:UniqueContinuationFrequency, SoaveTerracini2018:UniqueContinuationSublinear, Logunov2018:NodalSetsLaplaceEigenfunctions, FallFelli2014:UniqueCOntinuationFractional, Yu2017:UniqueContinuationFractional} for the cases of linear and semilinear operators, the analysis of eigenfunctions and the fractional case. In this paper, on the other hand, we present a result on the (strong) unique continuation property for nonlinear elliptic equations. 

As an introductory example, consider the functional $\mathcal{F}: W^{1, q}(B) \to \R$ defined by
\begin{equation}\label{eqn:quasilinearFunctionalF}
\mathcal{F}(u) \coloneqq \int_B L(\nabla u)\,,
\end{equation}
where the lagrangian $L: \R^d \to \R$ is 
\begin{equation}\label{eqn:QNDoublePhaseFunctional}
L(p) \coloneqq \frac{1}{2} \vert p \vert^2 + \frac{1}{q} \vert p \vert^q, \quad\text{with}\quad q >2,
\end{equation}
and the associated variational problem is
\begin{equation}\label{eqn:quasilinearVariationalPb}
\text{argmin} \left\{ \mathcal{F}(\varphi): \varphi \in W^{1, q}(B) \text{ and } \varphi = u_0 \text{ on } \partial B \right\}
\end{equation}
for some boundary datum $u_0 \in W^{1, q}(B)$ and some ball $B\subset\R^d$.

Let the function $u$ be the unique (by strict convexity) solution to \eqref{eqn:quasilinearVariationalPb}, then $u$ solves the nonlinear elliptic equation
\begin{equation}\label{eqn:minimizerUEqn}
\diverg\left( (1+\vert \nabla u \vert^{q-2}) \nabla u \right)=0.
\end{equation}
Thanks to the pioneering work of Marcellini \cite{Marcellini89:RegularityMinimizersNonStandardGrowth}, it is now known that if $q$ satisfies some upper bounds depending on the dimension (see \cref{sub:examples} below), the solution $u$ is at least $C^{2, \alpha}$ smooth (\cite[Theorem E]{Marcellini89:RegularityMinimizersNonStandardGrowth}). We notice that when $2<q<3$, the function $u$ is a solution to a problem of the form 
\[
\diverg\left(A(x)\nabla u\right)=0,
\]
where the matrix field
\begin{equation}\label{eqn:matrixFieldA(x)Def}
A(x):=(1+\vert \nabla u \vert^{q-2}) Id
\end{equation}
is only $(q-2)$-H\"older continuous even when $ u\in C^{2}(B)$, provided that $\nabla^2 u(x) \neq 0$.    
Hence, to the best of our knowledge, the well established theory of Garofalo and Lin \cite{GarofaloLin1986:MonotinicityApWeights, GarofaloLin1987:UniqueContinuationFrequency}, which requires $A\in C^{0,1}$, cannot be employed to study the property of unique continuation for the solution $u$. Thus, the main obstruction to the development of a unique continuation theory for this type of functionals is not the possible lack of regularity of $u$, but the nonlinear nature of the lagrangian. \medskip

In this paper, using the strategy developed in \cite{FerreriSpolaorVelichkov2024:BoundaryBranchingOnePhaseBernoulli}, which in turn was inspired by \cite{Almgren2000:AlmgrensBigRegularityPaper, DeLellisSpadaro2016:AreaMinimizingCurrents1LpEstimates, DeLellisSpadaro2016:AreaMinimizingCurrents2CenterManifold, DeLellisSpadaro2016:AreaMinimizingCurrents3BlowUp}, we are able to prove a unique continuation result for a class of nonlinear problems including the above case. In particular, even though the matrix field $A(x)$ from \eqref{eqn:matrixFieldA(x)Def} is, in general, only H\"older continuous, the quasilinear structure of \eqref{eqn:minimizerUEqn} allows us to recover the strong unique continuation property. In fact, our strategy applies to more general functionals 
\begin{equation}\label{eqn:quasilinearFunctionalF-general}
\mathcal{F}: W^{1, q}(B) \to \R\ ,\qquad \mathcal{F}(\varphi) \coloneqq \int_B L(x, \varphi,\nabla \varphi),
\end{equation}
where the lagrangian $L: \R^d\times \R\times \R^d \to \R$ is 
\[
L(x, s,p) \coloneqq \frac{1}{2} \vert p \vert^2 + F(x,s,p),
\]
and where the function $F$ satisfies the following conditions:
\begin{itemize}
\item $F$ is Lipschitz continuous in $x$ and there are constants $\gamma, C>0$, and a neighborhood $\mathcal U\subset \R^d\times\R\times\R^d$ of the origin such that 
\begin{equation}\label{e:stima-grad-F-in-X}
|F(x,s,p)|+|\nabla_xF(x,s,p)|\le C\left(|p|^{2+\gamma}+|s|^{2}\right)\quad \text{ for every }\quad (x,s,p)\in\mathcal U;
\end{equation}

\item $F$ is differentiable in $p$ and there exist constants $\gamma, C>0$, and a neighborhood $\mathcal U\subset \R^d\times\R\times\R^d$ of the origin such that 
\begin{equation}\label{e:stima-grad-F-in-p}
|\nabla_pF(x,s,p)|\le C\left(|p|^{1+\gamma}+|s|^{1+\gamma}\right)\quad \text{ for every }\quad (x,s,p)\in\mathcal U\,;
\end{equation}

\item there exist constants $\gamma, C>0$ and a neighborhood $\mathcal U\subset \R^d\times\R\times\R^d$ of the origin such that, 
for every $(x,s,p)\in\mathcal U$, the function 
$$f_{x,s,p}(t):= F(p,s(1+t))$$
is differentiable at $t=0$ and we have the following estimate: 
\begin{equation}\label{e:stima-derivata-F-in-s}
|f_{x,s,p}'(0)|\le C\left(|p|^{2+\gamma}+|s|^{2}\right) \text{ for every } (x,s,p)\in\mathcal U\,.
\end{equation}
We notice that $F$ when differentiable at $s$, we have that 
$$f_{x,s,p}'(0)=s\,\partial_sF(x,s,p).$$
From now on, for simplicity, we will often write $s\,\partial_sF(x,s,p)$ in place of $f_{x,s,p}'(0)$.
\end{itemize}
To state our results more clearly, it is convenient to work under the following
\begin{assumptions}
    \label{assumptions:QNuC1aAprioriEst}
    There exist constants $0<\alpha, \delta_0<1$ such that
    \[
    \| u \|_{C^{1, \alpha}\left( B \right)} \le \delta \quad \text{for some } \delta\in(0,\delta_0).
    \]
    Moreover, there exists a constant $C = C(d, \delta_0)>0$ such that
    \[
    \| u \|_{C^{0, \alpha}\left( B_{r/2}(x) \right)} \le C \left( \fint_{B_r(x)} u^2 \right) \quad \text{and} \quad \| \nabla u \|_{C^{0, \alpha}\left( B_{r/2}(x) \right)} \le \frac{C}{r} \left( \fint_{B_r(x)} u^2 \right)
    \]
    for all balls $B_r(x)$ with $B_r(x) \subset B$.
\end{assumptions}
\begin{remark}[On the $C^{1,\alpha}$ regularity in \cref{assumptions:QNuC1aAprioriEst}]
The regularity of the minimizers of $\mathcal F$ is now known to be strongly related to the growth of $\mathcal F$ in the $p$ variable. The unique continuation, on the other hand, relies on the behavior of $\mathcal F$ near singular points of $u$, where $s=0$ and $p=0$.
\end{remark}
\begin{remark}[On the linear $C^{0,\alpha}$ estimates in \cref{assumptions:QNuC1aAprioriEst}]
Since the unique continuation property is localized at singular points, i.e. where $u=\vert \nabla u \vert = 0$, the linear estimates in \cref{assumptions:QNuC1aAprioriEst} are a consequence of the $C^{1, \alpha}$ regularity of $u$. For the validity of \cref{assumptions:QNuC1aAprioriEst} we refer to \cref{sub:examples} and \cref{section:C1aL2LinearEst} below.    
\end{remark}
Our main result is the following
\begin{theorem}[Strong unique continuation]\label{theorem:quasilinearUniqueContinuation}
    Let the function $u \in W^{1, q}(B)$ be a local minimizer of \eqref{eqn:quasilinearFunctionalF-general} under \cref{assumptions:QNuC1aAprioriEst} and suppose that for some point $x_0 \in B$
    \begin{equation}\label{eqn:QNvanishingAllOrdersHp}
    \lim_{r\to 0}\frac{1}{r^n}\fint_{B_r(x_0)} u^2 = 0 \qquad \text{for all}\quad n \in \N\,.
    \end{equation}
    Then,
    \[
    u \equiv 0\quad \text{in}\quad B.
    \]
\end{theorem}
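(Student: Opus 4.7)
The plan is to adapt the classical Almgren frequency strategy to the nonlinear setting, following the line of \cite{FerreriSpolaorVelichkov2024:BoundaryBranchingOnePhaseBernoulli}. The main difficulty -- that the linearized coefficient matrix $A(x)=(1+|\nabla u|^{q-2})\mathrm{Id}$ is only H\"older continuous, so that the Garofalo--Lin framework does not apply directly -- will be bypassed by exploiting the quasilinear variational structure together with the $C^{1,\alpha}$-smallness provided by \cref{assumptions:QNuC1aAprioriEst}. The role of the structural conditions \eqref{e:stima-grad-F-in-X}--\eqref{e:stima-derivata-F-in-s} is precisely to guarantee that the nonlinear part $F$ of the Lagrangian is a \emph{subcritical} perturbation of the quadratic term $\tfrac12|p|^2$ near the origin, so that $F$-contributions in every relevant identity will carry either a gain of order $\delta^\gamma$ or an extra power of $|x-x_0|$.

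I fix $x_0 \in B$ as in the hypothesis and introduce the Almgren quantities
\[
H(r) := \int_{\partial B_r(x_0)} u^2, \qquad D(r) := \int_{B_r(x_0)} |\nabla u|^2, \qquad N(r) := \frac{r D(r)}{H(r)}.
\]
The Euler--Lagrange equation for a minimizer of \eqref{eqn:quasilinearFunctionalF-general} reads
\[
-\diverg\!\bigl(\nabla u + \nabla_p F(x,u,\nabla u)\bigr) + \partial_s F(x,u,\nabla u) = 0,
\]
with the usual interpretation of the last term via $f_{x,s,p}'(0)$. Differentiating $H$ via the coarea formula, testing the equation against $u$ to rewrite $D(r)$, and testing it against $(x-x_0)\cdot\nabla u$ to obtain a Pohozaev-type identity, I arrive at an expression for $N'(r)$ consisting of the usual nonnegative-by-Cauchy--Schwarz main term plus a finite number of error contributions involving $F$, $\nabla_x F$, $\nabla_p F$ and $s\,\partial_s F$.

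The structural assumptions \eqref{e:stima-grad-F-in-X}--\eqref{e:stima-derivata-F-in-s} are tailored so that, pointwise, each such error is dominated either by $C\delta^\gamma\bigl(|u|^2+|\nabla u|^2\bigr)$ -- using the $C^{1,\alpha}$-smallness to convert the extra $|p|^\gamma$ or $|s|^\gamma$ factor into a $\delta^\gamma$ gain -- or by a pure $|u|^2$ contribution which, at a singular point where $u$ and $\nabla u$ are small, yields an extra power of $r$ after integration. The linear $L^2$-based bounds in \cref{assumptions:QNuC1aAprioriEst} then allow one to absorb every error into $C\delta^\gamma\bigl(D(r)+r^{-1}H(r)\bigr)$ up to $O(r^\beta)$ corrections. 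Inserting this into the computation of $(\log N)'(r)$ yields an almost-monotonicity of the form
\[
\frac{d}{dr}\log N(r) \ge -C\,r^{\beta-1}\quad\text{on }(0,r_0),
\]
equivalent to the monotonicity of $e^{Cr^\beta}N(r)$. This is the main technical step, and it will be the principal obstacle: it requires a careful bookkeeping to verify that \emph{every} error generated by $F$ genuinely provides either a $\delta^\gamma$-factor or an extra power of $r$, and that the resulting balance of scales truly closes.

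From the almost-monotonicity, a standard ODE argument shows that, whenever $H(r_0)>0$, one has a polynomial lower bound of the form $H(r)\ge c\, r^{2N_0+d-1}$ on $(0,r_0)$ with $N_0<\infty$ depending on $r_0$, which is incompatible with \eqref{eqn:QNvanishingAllOrdersHp}. Hence $H\equiv 0$ on $(0,r_0)$, and therefore $u\equiv 0$ in $B_{r_0}(x_0)$. To conclude, I consider the set
\[
\Omega_0 := \Bigl\{x\in B \ :\ \lim_{r\to 0} r^{-n}\fint_{B_r(x)} u^2 = 0 \text{ for all } n\in\N \Bigr\},
\]
which is closed in $B$ by definition and, by the argument above applied at every one of its points, also open. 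Since $x_0\in\Omega_0\ne\emptyset$ and $B$ is connected, $\Omega_0 = B$ and thus $u\equiv 0$ in $B$.
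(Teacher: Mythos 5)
Your overall architecture — an Almgren-type frequency, inner and outer variations producing error terms, structural bounds on $F$ giving gains of $|p|^\gamma$ or $|s|^\gamma$, almost-monotonicity, a doubling inequality contradicting \eqref{eqn:QNvanishingAllOrdersHp} — matches the paper's. But there is a concrete gap in the way you propose to close the error estimates, and it is precisely the gap the paper's Whitney-decomposition machinery is designed to fill.

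The problem is the errors produced by the radial (Pohozaev/inner) variation that live on the annulus where the cutoff has a nonzero derivative, e.g.\ terms of the schematic form
\[
E^{i,4}(r)\sim -\int \varphi'\!\left(\tfrac{|x|}{r}\right)|\nabla u|^{2+\kappa},\qquad
E^{o,4}(r)\sim -\int \varphi'\!\left(\tfrac{|x|}{r}\right)|u|\,|\nabla u|^{1+\kappa}.
\]
These terms are controlled by $D_0'(r)$, not $D_0(r)$. Using the $C^{1,\alpha}$-smallness at the singular point, the best pointwise bound on the relevant annulus is $|\nabla u|\le \delta\, r^{\alpha}$, which gives $|E^{i,4}(r)|\lesssim \delta^{\kappa} r^{\alpha\kappa}\cdot r\,D_0'(r)$. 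Dividing by $r\,D(r)\approx r\,D_0(r)$ in $(\log N)'(r)$ one is left with
\[
\delta^{\kappa}\, r^{\alpha\kappa}\,\frac{D_0'(r)}{D_0(r)}\;=\;\delta^{\kappa}\, r^{\alpha\kappa}\,\frac{d}{dr}\log D_0(r),
\]
and there is no a priori reason this is integrable near $r=0$: before the theorem is proven, $D_0$ is allowed to vanish faster than any polynomial, so $\log D_0(r)$ may dominate the factor $r^{\alpha\kappa}$. Consequently the claimed bound $\frac{d}{dr}\log N(r)\ge -C\,r^{\beta-1}$ does not follow from $C^{1,\alpha}$-smallness alone, and the almost-monotonicity of $e^{C r^\beta}N(r)$ is not established. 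This is not a cosmetic issue: without it the upper bound $N(r)\le N_0$ on $(0,r_0)$ fails, and the polynomial lower bound for $H$ (and hence the contradiction with \eqref{eqn:QNvanishingAllOrdersHp}) evaporates.

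The paper resolves this by first proving, via a Whitney decomposition in the spirit of Almgren--De Lellis--Spadaro, the scale-preserving $L^\infty$--$L^2$ estimate (\cref{lemma:QNenergyEstCubesHD}) $\|u\|_{L^\infty(L)}+\|\nabla u\|_{L^\infty(L)}\le C\,D_0(r)^{\lambda}$ for Whitney cubes $L$ meeting $B_r$. Substituting this in place of $\delta r^\alpha$ turns the boundary errors into
\[
D_0(r)^{\lambda\kappa}\cdot\frac{D_0'(r)}{D_0(r)}\;=\;D_0(r)^{\lambda\kappa-1}D_0'(r)\;=\;\frac{1}{\lambda\kappa}\frac{d}{dr}D_0(r)^{\lambda\kappa},
\]
which is an exact derivative of a bounded quantity and hence unconditionally integrable. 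This is also why the correct correction factor is $g(r)=\frac{C}{\beta}\bigl[r^\beta+D(r)^\beta\bigr]$, not $C r^\beta$: the two pieces handle the two distinct classes of errors. Your linear $L^2$-based bounds from \cref{assumptions:QNuC1aAprioriEst} are indeed an ingredient here, but applied naively on $B_r$ they give $\|\nabla u\|_{L^\infty(B_{r/2})}\lesssim r^{-1}\fint_{B_r}u^2$, which does not cover the annulus where $\varphi'$ lives nor yield the algebraically correct power of $D_0$; the Whitney structure is what makes the linear bounds usable at the right scale.

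Two smaller remarks. First, you work with the classical sharp boundary versions of $H,D$; the paper uses cutoff-averaged quantities \eqref{eqn:quasilinearFrequencyHeightDef}--\eqref{eqn:quasilinearEnergyDef}, which avoid boundary-integral manipulations and fit better with the Whitney bookkeeping. Second, your final step via the closed/open set $\Omega_0$ and connectedness of $B$ is a perfectly valid alternative to the paper's iteration argument for passing from $B_{r_0}(x_0)$ to all of $B$.
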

\begin{corollary}[Weak unique continuation]\label{cor:intro-weak-unique-continuation}
    Let the function $u \in W^{1, q}(B) \cap C^{1, \alpha}(B)$ be a local minimizer of \eqref{eqn:quasilinearFunctionalF-general} and suppose that $u= 0$ in some open subset $U\subset B$, then $u\equiv 0$ in $B$.
\end{corollary}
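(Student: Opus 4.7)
The plan is to derive this open-set statement from the pointwise strong unique continuation \cref{theorem:quasilinearUniqueContinuation} by a connectedness argument, with a rescaling step to recover the smallness of \cref{assumptions:QNuC1aAprioriEst}. Define
\[
W := \{x \in B : u \equiv 0 \text{ in an open neighborhood of } x\}.
\]
By hypothesis $U \subset W$, so $W$ is a nonempty open subset of the connected ball $B$; it suffices to show that $W$ is also closed in $B$. Pick $x_0 \in \overline{W}\cap B$. Since $u\equiv 0$ and $\nabla u \equiv 0$ on $W$ and $u\in C^{1,\alpha}(B)$, continuity gives $u(x_0) = 0 = \nabla u(x_0)$, and we may choose a sequence $x_n \in W$ with $x_n \to x_0$.

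The obstacle is that \cref{theorem:quasilinearUniqueContinuation} requires the $C^{1,\alpha}$ smallness bound of \cref{assumptions:QNuC1aAprioriEst}, which is not part of the corollary's hypotheses. I would resolve this by rescaling around $x_0$. For $t>0$ small enough that $B_{2t}(x_0)\subset B$, set
\[
u_t(y) := \frac{1}{t}\, u(x_0 + ty),\qquad y \in B_2.
\]
Then $u_t$ is a local minimizer on $B_2$ of the rescaled functional with Lagrangian $L_t(y,s,p) := \tfrac12|p|^2 + F_t(y,s,p)$, where $F_t(y,s,p) := F(x_0+ty,\, ts,\, p)$. A direct check shows that $F_t$ satisfies the structural conditions \eqref{e:stima-grad-F-in-X}--\eqref{e:stima-derivata-F-in-s} with the same constants, uniformly in $t\in(0,1]$, since the contractions in the $x$ and $s$ arguments produce only additional favorable powers of $t$ on the right-hand sides.

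Using $u(x_0)=\nabla u(x_0)=0$ and $u\in C^{1,\alpha}(B)$, the scaling identities yield
\[
\|u_t\|_{L^\infty(B_2)} + \|\nabla u_t\|_{L^\infty(B_2)} + [\nabla u_t]_{C^\alpha(B_2)} \le C\, t^\alpha,
\]
with $C$ depending only on $[\nabla u]_{C^\alpha(B)}$. Hence, for $t$ small enough, \cref{assumptions:QNuC1aAprioriEst} holds for $u_t$ on the ball $B_2$ (the linear $C^{0,\alpha}$ estimates are consequences of the $C^{1,\alpha}$ bound, as noted in the remark following the Assumptions). For such a $t$, choose $n$ with $|x_n-x_0|<t$, so that the rescaled point $\tilde x_n := (x_n-x_0)/t$ lies in $B_1 \subset B_2$. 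Since $u$ vanishes in a neighborhood of $x_n$, $u_t$ vanishes in a neighborhood of $\tilde x_n$, and the infinite-order vanishing condition \eqref{eqn:QNvanishingAllOrdersHp} holds trivially at $\tilde x_n$.

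Applying \cref{theorem:quasilinearUniqueContinuation} to $u_t$ on $B_2$ with base point $\tilde x_n$ gives $u_t\equiv 0$ on $B_2$; unrescaling yields $u\equiv 0$ on $B_{2t}(x_0)$, and hence $x_0\in W$, completing the closedness step. The main technical difficulty in executing this plan is the bookkeeping of the rescaling: verifying that $F_t$ satisfies the three structural bounds with constants uniform in $t$, and ensuring that the $C^{1,\alpha}$ norm of $u_t$ drops below the threshold $\delta_0$ by exploiting the simultaneous vanishing of $u$ and $\nabla u$ at the boundary point $x_0$ of $W$.
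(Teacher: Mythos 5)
Your connectedness-plus-rescaling argument is sound in outline and is very likely how the authors intend the corollary to follow from \cref{theorem:quasilinearUniqueContinuation} (the paper does not display an explicit proof). The definition of $W$, the observation that $u(x_0)=\nabla u(x_0)=0$ at boundary points of $W$ by $C^{1,\alpha}$ continuity, the rescaling $u_t(y)=t^{-1}u(x_0+ty)$ with Lagrangian $L_t(y,s,p)=\tfrac12|p|^2+F(x_0+ty,ts,p)$, the check that the structural bounds \eqref{e:stima-grad-F-in-X}--\eqref{e:stima-derivata-F-in-s} hold for $F_t$ uniformly in $t\in(0,1]$, and the $O(t^\alpha)$ decay of $\|u_t\|_{C^{1,\alpha}(B_2)}$ coming from $u(x_0)=\nabla u(x_0)=0$ are all correct.

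The one place where the argument is hand-waved is the parenthetical claim that ``the linear $C^{0,\alpha}$ estimates are consequences of the $C^{1,\alpha}$ bound.'' The paper obtains the linear estimates in \cref{assumptions:QNuC1aAprioriEst} from \cref{prop:C1a-L2LinearBounds}, which needs not only the $C^{1,\alpha}$ smallness but also the two extra structural hypotheses (i) and (ii) there, namely that $\left[\nabla_pF(\cdot,p)\right]_{C^{0,\alpha}}=o(1)$ as $\delta\to0$ and $\nabla_p^2F(x,0)=0$. For the argument to close you should verify these for the rescaled $F_t$; fortunately the rescaling only helps, since $\left[\nabla_pF_t(\cdot,p)\right]_{C^{0,\alpha}(B_2)}=t^\alpha\left[\nabla_pF(\cdot,p)\right]_{C^{0,\alpha}}$ and $\nabla_p^2F_t(y,0)=\nabla_p^2F(x_0+ty,0)=0$, but this should be said. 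To be fair, the paper's own hypotheses in \cref{cor:intro-weak-unique-continuation} do not formally list (i)--(ii), so part of the gap is inherited from the statement. A second, smaller point of the same flavour: conditions \eqref{e:stima-grad-F-in-X}--\eqref{e:stima-derivata-F-in-s} are stated for $(x,s,p)$ in a neighbourhood $\mathcal U$ of the origin in $\R^d\times\R\times\R^d$, whereas your rescaled problem evaluates $F$ at $(x_0+ty,ts,p)$ with $x_0$ ranging over $\overline W\cap B$; for the argument (and indeed for the paper's own Theorem~\ref{theorem:quasilinearUniqueContinuation} applied at generic base points) one needs the conditions to hold for $x$ ranging over all of $B$ with only $(s,p)$ near zero, which is what the concrete examples in \cref{sub:examples} actually satisfy and is presumably the intended reading.
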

Finally, as a consequence of the classical Federer's dimension reduction principle, we obtain the following 
\begin{theorem}[Dimension of the critical set]\label{teo:intro-dimension-critical-sets}
 Let the function $u \in W^{1, q}(B) \cap C^{1, \alpha}(B)$ be a local minimizer of \eqref{eqn:quasilinearFunctionalF-general}, then either $u\equiv 0$ in $B$ or
\[
\dim(\{u=0\,\text{ and }|\nabla u|=0\}\cap B)\leq d-2
\]    
\end{theorem}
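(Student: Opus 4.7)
The plan is to apply Federer's classical dimension reduction principle to the critical set $\Sigma:=\{u=0\text{ and }|\nabla u|=0\}\cap B$, using \cref{theorem:quasilinearUniqueContinuation} as the new input. Assuming $u\not\equiv 0$, \cref{theorem:quasilinearUniqueContinuation} ensures that at every $x_0\in\Sigma$ there is a finite vanishing order $N(x_0)$, conveniently tracked via the Almgren-type frequency whose almost-monotonicity is proven earlier in the paper. Since $u\in C^{1,\alpha}$ vanishes together with its gradient at $x_0$, we moreover have $N(x_0)\ge 2$.

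The next step is a blow-up analysis. For $x_0\in\Sigma$, set
\[
u_{x_0,r}(x):=\frac{u(x_0+rx)}{H(x_0,r)^{1/2}},\qquad H(x_0,r):=\fint_{B_r(x_0)}u^2,
\]
and use the almost-monotone frequency together with the linear $C^{0,\alpha}$ estimates in \cref{assumptions:QNuC1aAprioriEst} to extract a subsequential $C^{1}_{\loc}$-limit $u_{x_0}$. The growth hypotheses \eqref{e:stima-grad-F-in-X}--\eqref{e:stima-derivata-F-in-s} on $F$, all carrying the super-quadratic exponent $2+\gamma$, force the nonlinear contribution in the rescaled Euler--Lagrange equation to vanish in the limit, so $u_{x_0}$ is a nontrivial homogeneous harmonic polynomial of degree $N(x_0)\ge 2$.

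The algebraic core of Federer's reduction is the analysis of the \emph{spine}, i.e. the maximal linear subspace $V\subset\R^d$ along which $u_{x_0}$ is translation-invariant. Since $u_{x_0}$ is independent of the $V$-directions, writing $u_{x_0}(x)=P(y)$ with $y\in V^\perp$, $P$ is a nontrivial homogeneous harmonic polynomial of degree $\ge 2$ on $V^\perp$; as no such polynomial exists in one variable, $\dim V^\perp\ge 2$, whence $\dim V\le d-2$. Stratifying $\Sigma=\bigcup_{k=0}^{d-2}\Sigma_k$ by the spine dimension of the tangent maps available at each point, Federer's scheme yields $\dim_{\Hcal}(\Sigma_k)\le k$ and hence $\dim_{\Hcal}(\Sigma)\le d-2$.

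I expect the main obstacle to be not any single one of the three steps but their packaging into a Federer-compatible framework: upper semicontinuity of the vanishing order along $\Sigma$, compactness and nondegeneracy of iterated blow-ups, and the fact that blow-ups of blow-ups remain in the class of homogeneous harmonic polynomials. In the classical Garofalo--Lin setting these are automatic, but here they have to be reproved on top of the almost-monotone frequency and the uniform a priori estimates of \cref{assumptions:QNuC1aAprioriEst}, i.e. exactly the toolkit that drives \cref{theorem:quasilinearUniqueContinuation}.
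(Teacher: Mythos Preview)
Your proposal is correct and follows exactly the approach the paper itself indicates: the paper does not give a detailed proof of \cref{teo:intro-dimension-critical-sets} but simply states it ``as a consequence of the classical Federer's dimension reduction principle,'' relying on the almost-monotone frequency of \cref{thm:QNfreqmon} and the unique continuation of \cref{theorem:quasilinearUniqueContinuation}. Your outline---finite vanishing order from \cref{theorem:quasilinearUniqueContinuation}, blow-ups converging to nontrivial homogeneous harmonic polynomials of degree $\ge 2$ via the frequency and the superquadratic bounds \eqref{e:stima-grad-F-in-X}--\eqref{e:stima-derivata-F-in-s}, and the spine analysis yielding $\dim V\le d-2$---is precisely the standard execution of Federer's scheme that the paper leaves implicit, and the technical caveats you flag (upper semicontinuity of the frequency, stability under iterated blow-ups) are the expected ingredients.
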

%
The main idea to prove \cref{theorem:quasilinearUniqueContinuation} and \cref{teo:intro-dimension-critical-sets} is to show an (almost-)monotonicity of the frequency function for harmonic function by using ideas similar to \cite{FerreriSpolaorVelichkov2024:BoundaryBranchingOnePhaseBernoulli}. We remark that the same strategy applies also to more general energies, which can also depend less regularly on the variable $x$. For instance, 
\[
L(x,s,p) \coloneqq \frac{1}{2} (p\cdot A(x)p) + s b(x)\cdot p+V(x)\,s^2+F(x, s,p),
\]
with $A,b$ Lipschitz continuous and  $V\in L^\infty$.
We chose to present this paper in the simpler setting above, but for the required modifications one can look for instance at \cite{GarofaloLin1987:UniqueContinuationFrequency} or \cite{FerreriSpolaorVelichkov2024:BoundaryBranchingOnePhaseBernoulli}.

\subsection{Further examples}\label{sub:examples}

\begin{itemize}
    \item General \emph{autonomous functionals} with $(2,q)$ - growth (\cite{Marcellini89:RegularityMinimizersNonStandardGrowth})
    \[
    L(x,s,p)=\phi(p),
    \]
    where $\phi:\R^d\to\R$ is a $C^{2,\alpha}$ function for some $\alpha>0$. A ($C^{2,\alpha}$-)regularity theorem for minimizers of the above functionals was proved in \cite[Theorem E]{Marcellini89:RegularityMinimizersNonStandardGrowth} and \cite[Theorem 3 and Corollary 1]{BellaShaffner2020:RegularityMinimizersPQgrowth} for $\phi$ satisfying the following growth conditions:
   \begin{equation*}
   \begin{cases}
   m|p|^2\le \phi(p)\le M(1+|p|^q)\,,\\
   m|\xi|^2\le \xi\cdot\nabla^2\phi(p)\xi\le M(1+|p|^2)^{\frac{q-2}{2}}|\xi|^2\quad\text{for every}\quad\xi\in\R^d\,,
   \end{cases}
   \end{equation*}
    for some constants $0<m\le M<+\infty$, and under the following bounds on $q$:
   \begin{equation}\label{e:intro-conditions-bella-schaffner-q}
   2\le q\le 2+\min\left\{2,\frac{4}{d-1}\right\}.
   \end{equation}
Our unique continuation theorems (\cref{theorem:quasilinearUniqueContinuation}, \cref{cor:intro-weak-unique-continuation} and \cref{teo:intro-dimension-critical-sets}) apply to functionals with lagrangians of the form 
$$L(x,s,p)=\frac12|p|^2+\psi(p),$$
where $\psi:\R^d\to\R$, $d\ge 2$, is a $C^{2,\alpha}$-regular non-negative convex function satisfying 
\begin{equation}\label{e:intro-our-conditions-on-bella-schaffner-1}
\begin{cases}
   0\le \psi(p)\le M|p|^q\,,\\
  0\le \xi\cdot\nabla^2\psi(p)\xi\le M|p|^{q-2}|\xi|^2\quad\text{for every}\quad\xi\in\R^d\,,
   \end{cases}
 \end{equation}  
 where $M>0$ is a positive constant and where the exponent $q$ satisfies the bounds
 \begin{equation}\label{e:intro-our-conditions-on-bella-schaffner-2}
   2< q\le 2+\min\left\{2,\frac{4}{d-1}\right\}\,.
   \end{equation}
 Indeed, the conditions \eqref{e:intro-our-conditions-on-bella-schaffner-1} and \eqref{e:intro-our-conditions-on-bella-schaffner-2} on $\psi$ assure that the lagrangian satisfies both the conditions from \cite{BellaShaffner2020:RegularityMinimizersPQgrowth,Marcellini89:RegularityMinimizersNonStandardGrowth} (so the solutions are $C^{1,\alpha}$ regular) and the conditions \eqref{e:stima-grad-F-in-X} and \eqref{e:stima-grad-F-in-p}. Finally, for this functional, the $C^{1,\alpha}$ regularity of $u$ implies that for small $\delta$ the linear estimates from \cref{assumptions:QNuC1aAprioriEst} hold (see \cref{prop:C1a-L2LinearBounds}).
   \item \emph{Double phase functionals}. Consider the lagrangian 
    \[
    L(x,s,p)=|p|^2+a(x)|p|^q
    \]
    where the coefficient $a$ is non-negative and Lipschitz continuous, and where $q$ satisfies the condition 
\begin{equation}\label{e:double-phase-q-exponent-condition}
 2<q\le2+\frac{2}{d}\,.   
\end{equation}
Then, the unique continuation theorems \cref{theorem:quasilinearUniqueContinuation}, \cref{cor:intro-weak-unique-continuation} and \cref{teo:intro-dimension-critical-sets} hold for any minimizer $u$ of $\mathcal F$. Indeed, under the condition \eqref{e:double-phase-q-exponent-condition}, the minimizers $u$ of $\mathcal F$ are in $C^{1,\alpha}$ for some $\alpha>0$ (see \cite{BaroniMingioneColombo2018CalcVar,ColomboMingione2015ARMA}). At the same time, it is immediate to check that the conditions \eqref{e:stima-grad-F-in-X} and \eqref{e:stima-grad-F-in-p} are fulfilled. Finally, as in the case of autonomous functionals, the linear estimates from \cref{assumptions:QNuC1aAprioriEst} hold as a consequence of \cref{prop:C1a-L2LinearBounds}.
    \item \emph{Multiphase functionals.} Consider the functional
    \[
    L(x,s,p)=|p|^2+a(x)|p|^{q}+b(x)|p|^s,
    \]
    where $a$ and $b$ are non-negative Lipschitz functions, and where the exponents $q$ and $s$ satisfy the condition
    \begin{equation}\label{e:intro-multi-phase-q-exponent-condition}
 2<q\le s\le2+\frac{2}{d}\,.   
\end{equation}
Then, the conclusions of \cref{theorem:quasilinearUniqueContinuation}, \cref{cor:intro-weak-unique-continuation} and \cref{teo:intro-dimension-critical-sets} hold for any minimizer $u$ of $\mathcal F$ of this form. Indeed, in \cite{DeFilippisOh:RegularityMultiPhaseVariationalPb} it was shown that the condition \eqref{e:intro-multi-phase-q-exponent-condition}, together with the Lipschitz continuity and the positivity of $a$ and $b$, implies the $C^{1,\alpha}$ regularity of the minimizers $u$ to $\mathcal F$. Thus, in a neighborhood of a point $x_0$ such that $u(x_0)=|\nabla u(x_0)|=0$ the $C^{1,\alpha}$ norm of $u$ is small and, by Proposition \cref{prop:C1a-L2LinearBounds}, \cref{assumptions:QNuC1aAprioriEst} is fulfilled. Finally, it is immediate to check that \eqref{e:stima-grad-F-in-X} and \eqref{e:stima-grad-F-in-p} hold for this functional, so we can apply \cref{theorem:quasilinearUniqueContinuation}, \cref{cor:intro-weak-unique-continuation} and \cref{teo:intro-dimension-critical-sets}.
\end{itemize}

\section{Linear $C^{1, \alpha}-L^2$ estimates}\label{section:C1aL2LinearEst}
In this section we show how a $C^{1, \alpha}$ regularity assumptions in $B$ for minimizers of \eqref{eqn:quasilinearFunctionalF-general} can be used to prove linear $C^{1, \alpha}-L^2$ near singular points, i.e. where $u=0$ and $\nabla u = 0$. In particular, since in all the examples presented in \cref{sub:examples} the lagrangian depends only on $x$ and $\nabla u$, for the sake of simplicity we only consider such dependence. 

More precisely, in this section we consider functionals $\mathcal{F}: W^{1, q}(B) \to \R$ of the form
\begin{equation}\label{eqn:quasilinearFunctionalF-x-p}
\mathcal{F}(\varphi) = \int_B L(x,\nabla \varphi(x))\,dx
\end{equation}
where the lagrangian $L: \R^d\times\R^d \to \R$ is of the form 
\[
L(x,p) \coloneqq \frac{1}{2} \vert p \vert^2 + F(x,p),
\]
and the function $F$ satisfies the hypotheses \eqref{e:stima-grad-F-in-X} and \eqref{e:stima-grad-F-in-p}.

We are ready for to state the following
\begin{proposition}\label{prop:C1a-L2LinearBounds}
    Let the function $u \in W^{1, q}(B) \cap C^{1, \alpha}(B)$ be a local minimizer of \eqref{eqn:quasilinearFunctionalF-general} and suppose that
    \begin{equation}\label{eqn:C1aUdelta}
    \| u \|_{C^{1, \alpha}(B)} \le \delta.
    \end{equation}
    Moreover, assume also the following two conditions:
    \begin{itemize}
        \item[(i)] $\nabla_p F(\cdot, p) \in C^{0, \alpha}(B)$ for some $\alpha>0$ and every $p \in B$, with the estimate
        \begin{equation}\label{eqn:furtherAssC0a}
             \left[\nabla_p F(\cdot, p) \right]_{C^{0, \alpha}(B)} = o(1) \quad \text{as } \delta \to 0.
        \end{equation}
        where the quantity $o(1)$ is uniform in $p$; 

        \item[(ii)] $\nabla_p F(x, \cdot) \in C^1(B)$ for all $x \in B$ and
        \begin{equation}\label{eqn:furtherAssC1p}
             \nabla_p^2 F(x, 0) = 0.
        \end{equation}
    \end{itemize}
    There exists a constant $\delta_0 = \delta_0(d) > 0$ such that, if $\delta \le \delta_0$ in \eqref{eqn:C1aUdelta}, then
    \begin{equation}\label{e:linear-bounds}
    \| u \|_{C^{0, \alpha}\left( B_{r/2}(x) \right)} \le C \left( \fint_{B_r(x)} u^2 \right) \quad \text{and} \quad \| \nabla u \|_{C^{0, \alpha}\left( B_{r/2}(x) \right)} \le \frac{C}{r} \left( \fint_{B_r(x)} u^2 \right)
    \end{equation}
    for some constant $C = C(d, \delta_0)>0$ and all balls $B_r(x)$ with $B_r(x) \subset B$.
\end{proposition}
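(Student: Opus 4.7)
The plan is to show that in the small $C^{1,\alpha}$ regime the Euler--Lagrange equation is a Hölder-small perturbation of the Laplace equation, and then to read off \eqref{e:linear-bounds} from the classical linear theory (De Giorgi--Nash--Moser and Schauder).

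The weak Euler--Lagrange equation for a local minimizer of \eqref{eqn:quasilinearFunctionalF-x-p} is
$$\diverg\bigl(\nabla u+\nabla_p F(x,\nabla u)\bigr)=0,$$
i.e.\ $\Delta u=-\diverg g$ with $g(x):=\nabla_p F(x,\nabla u(x))$. The hypothesis \eqref{e:stima-grad-F-in-p} gives $\nabla_p F(x,0)=0$, and hypothesis (ii) gives $\nabla_p^2 F(x,0)=0$; combined with the continuity of $\nabla_p^2 F$ in $p$ and $\|\nabla u\|_\infty\le\delta$, this produces a modulus $\omega$ with $\omega(0)=0$ such that $\sup_{|p|\le\delta}|\nabla_p^2 F(x,p)|\le\omega(\delta)$. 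Writing $\nabla_p F(x,q)=M(x,q)\,q$ with $M(x,q):=\int_0^1\nabla_p^2 F(x,tq)\,dt$ and setting $A(x):=\mathrm{Id}+M(x,\nabla u(x))$, the equation rewrites as $\diverg(A\nabla u)=0$ with $A$ uniformly elliptic and $\|A-\mathrm{Id}\|_\infty\le\omega(\delta)$.

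Next I would verify that $g$ (equivalently $A$) is Hölder continuous in $x$ with seminorm tending to $0$ as $\delta\to 0$. Splitting
$$g(x)-g(y)=\bigl[\nabla_p F(x,\nabla u(x))-\nabla_p F(x,\nabla u(y))\bigr]+\bigl[\nabla_p F(x,\nabla u(y))-\nabla_p F(y,\nabla u(y))\bigr],$$
the first bracket is bounded by $\omega(\delta)\,[\nabla u]_{C^{0,\alpha}(B_r)}\,|x-y|^\alpha$ (by (ii) and continuity), while the second is bounded by (i) and \eqref{eqn:furtherAssC0a} via $[\nabla_p F(\cdot,\nabla u(y))]_{C^{0,\alpha}(B_r)}|x-y|^\alpha$, which is $o(1)_\delta\,|x-y|^\alpha$ uniformly in $y$. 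Hence
$$[g]_{C^{0,\alpha}(B_r)}\le\omega(\delta)\,[\nabla u]_{C^{0,\alpha}(B_r)}+o(1)_\delta,$$
which is as small as desired for $\delta$ small.

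Finally, I would chain the classical linear estimates for $\diverg(A\nabla u)=0$ with $A$ in $C^{0,\alpha}$ close to $\mathrm{Id}$: the De Giorgi--Nash--Moser $L^2$-to-$L^\infty$ bound
$$\|u\|_{L^\infty(B_{3r/4}(x))}\le C\Bigl(\fint_{B_r(x)}u^2\Bigr)^{1/2},$$
and the Schauder $C^{1,\alpha}$ interior estimate
$$\|\nabla u\|_{C^{0,\alpha}(B_{r/2}(x))}\le \frac{C}{r}\,\|u\|_{L^\infty(B_{3r/4}(x))},$$
together with the analogous $C^{0,\alpha}$ bound on $u$ itself, to deduce \eqref{e:linear-bounds}. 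The main technical hurdle is the self-referential character of the Hölder estimate for $g$: $[g]_{C^{0,\alpha}}$ depends on $[\nabla u]_{C^{0,\alpha}}$ through the first bracket above. The smallness $\omega(\delta)\to 0$ is exactly what allows one to absorb the $[\nabla u]_{C^{0,\alpha}}$-term on the left, either by iterating the Schauder estimate on dyadic sub-balls or, equivalently, via a Caffarelli-type compactness/perturbation argument comparing $u$ with harmonic functions on many scales.
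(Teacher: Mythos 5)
Your structural reduction — rewriting the Euler--Lagrange equation as $\Delta u=-\diverg g$ with $g(x)=\nabla_pF(x,\nabla u(x))$, showing $g$ is $C^{0,\alpha}$-small, and chaining De~Giorgi--Nash--Moser with Schauder — is the right framework and is broadly parallel to the paper's two-step argument (\cref{lemma:C0aU-L2LinearBound} for the $u$-bound via De~Giorgi--Nash--Moser, and \cref{lemma:C0aGradU-L2LinearBound} plus iteration for the $\nabla u$-bound). However, the absorption step as you have written it has a genuine gap. You decompose
\[
[g]_{C^{0,\alpha}(B_r)}\le \omega(\delta)\,[\nabla u]_{C^{0,\alpha}(B_r)}+o(1)_\delta,
\]
and propose to absorb the first term; that much is fine (it is the standard interpolation/iteration trick). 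But the second term, which comes from hypothesis~(i) and controls the $x$-dependence $[\nabla_pF(\cdot,\nabla u(y))]_{C^{0,\alpha}(B_r)}$, is a \emph{constant} in this argument: it tends to $0$ as $\delta\to0$ but is in no way proportional to $\|u\|_{L^\infty(B_r)}/r^{1+\alpha}$. Plugging into the Schauder estimate leaves
\[
[\nabla u]_{C^{0,\alpha}(B_{r/2})}\lesssim \frac{\|u\|_{L^\infty(B_r)}}{r^{1+\alpha}}+o(1)_\delta,
\]
and the additive $o(1)_\delta$ cannot be removed; in particular for $u$ very flat near $y$ this does not yield the linear $L^2$ bound~\eqref{e:linear-bounds}.

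The paper avoids exactly this problem by not invoking off-the-shelf Schauder. Instead, \cref{lemma:C0aGradU-L2LinearBound} is a compactness/excess-decay lemma in the spirit you call ``Caffarelli-type'': the blow-up sequence has $\delta_k\to0$, $r_k\to0$, \emph{and} excess $\varepsilon_k\to0$ simultaneously, and the $x$-dependence of $\nabla_pF$ is measured \emph{against the excess} via hypothesis~\eqref{eqn:QNC1aLinearxHp}, i.e.\ $\|\nabla_pF(\cdot,\nabla l)-\nabla_pF(x,\nabla l)\|_{L^\infty(B_r(x))}\le o(1)\,\varepsilon$. Because the error is of order $o(1)\varepsilon$ rather than $o(1)$, it vanishes in the linearization $w_k=(u_k-l_k)/(r_k\varepsilon_k)$, which is exactly what is needed. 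If you want to complete your proof you would have to make the excess an explicit quantity in the argument, either by reproducing the paper's compactness lemma and dyadic iteration, or by formulating a dichotomy (if $\varepsilon\lesssim r^\alpha$ the estimate is trivially satisfied, otherwise the $o(1)_\delta r^\alpha$ error is controlled by $o(1)_\delta\varepsilon$). A phrase like ``iterating the Schauder estimate on dyadic sub-balls'' does not by itself address the non-homogeneous $o(1)_\delta$ term, because that term is scale-independent.
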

\begin{remark}
    Notice that, both conditions (i) and (ii) in \cref{prop:C1a-L2LinearBounds} are satisfied by all the examples in \cref{sub:examples}.
\end{remark}
The proof of \cref{prop:C1a-L2LinearBounds} is divided in two steps:
\begin{enumerate}
    \item in \cref{lemma:C0aU-L2LinearBound} we show the linear $C^{0, \alpha}$ bound for $u$, using the classical De Giorgi-Nash-Moser iterations;

    \item in \cref{lemma:C0aGradU-L2LinearBound} we prove the linear $C^{0, \alpha}$ bound for $\nabla u$; this is carried out using step 1 and a linearization argument.
\end{enumerate}
Let us proceed in order.
\begin{lemma}\label{lemma:C0aU-L2LinearBound}
    Let $u$, $\delta$, $\delta_0$, $C$ and $B_r(x)$ be as in \cref{prop:C1a-L2LinearBounds}. Then, 
\begin{equation*}
    \| u \|_{C^{0, \alpha}\left( B_{r/2}(x) \right)} \le C \left( \fint_{B_r(x)} u^2 \right).
\end{equation*}
\end{lemma}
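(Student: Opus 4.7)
The plan is to rewrite the Euler--Lagrange equation of $\mathcal F$ as a linear divergence-form equation with bounded measurable, nearly-identity coefficients, and then apply the classical De Giorgi--Nash--Moser $L^2$-to-H\"older estimate.

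First, as a local minimizer of the strictly convex functional $\mathcal F$, the function $u$ weakly solves
\[
\diverg\bigl(\nabla u + \nabla_pF(x,\nabla u)\bigr)=0 \quad\text{in }B.
\]
Evaluating \eqref{e:stima-grad-F-in-p} at $s=p=0$ (recalling $F=F(x,p)$ in this section) gives $\nabla_pF(x,0)=0$, so the fundamental theorem of calculus yields
\[
\nabla_pF(x,\nabla u(x))=M(x)\,\nabla u(x),\qquad M(x):=\int_0^1\nabla_p^2 F(x,t\nabla u(x))\,dt,
\]
and hence $u$ satisfies the linear equation
\[
\diverg\bigl(A(x)\nabla u\bigr)=0 \quad \text{in } B, \qquad A(x):=I+M(x)\in L^\infty(B).
\]

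Next, I would show that $\|A-I\|_{L^\infty(B)}=o(1)$ as $\delta\to 0$. By hypothesis (ii) we have $\nabla_p^2 F(x,0)=0$, and since $\nabla_pF(x,\cdot)\in C^1$ with the uniform-in-$x$ H\"older smallness from (i), the map $\nabla_p^2F(x,p)$ is small uniformly in $x$ for $|p|\le\delta$ as $\delta\to0$. Combined with $|\nabla u|\le\delta$ from \eqref{eqn:C1aUdelta}, this forces $\|M\|_{L^\infty(B)}\to 0$ as $\delta\to 0$, so for $\delta\le\delta_0(d)$ the matrix field $A$ is uniformly elliptic with ratio as close to $1$ as desired.

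With the equation in uniformly elliptic divergence form and purely homogeneous right-hand side, the conclusion then follows from the classical De Giorgi--Nash--Moser $L^2$-to-H\"older estimate (e.g.\ Gilbarg--Trudinger, Thm.~8.22), applied after rescaling to each sub-ball $B_r(x)\subset B$ and with the constants depending only on $d$ and $\delta_0$. The \emph{main technical obstacle} is ensuring that the De Giorgi--Nash H\"older exponent produced by the estimate is at least the target $\alpha$ appearing in the statement: this is secured by taking $\delta_0$ small enough that the ellipticity ratio of $A$ is close enough to $1$ for a perturbative/Campanato-type argument (based on closeness to the Laplace equation) to produce $C^{0,\alpha}$-regularity for any prescribed $\alpha\in(0,1)$, with the linear $L^2$-type bound on the right-hand side.
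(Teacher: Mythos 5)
Your route — rewriting the Euler–Lagrange equation as $\diverg(A(x)\nabla u)=0$ with $A$ uniformly close to the identity, and then invoking De~Giorgi/Campanato theory — is a valid and genuinely different presentation from the paper's. The paper does not pass through a linearized coefficient matrix at all: it tests the outer variation directly with the truncations $\varphi^\pm=\eta^2(u-k)^\pm$, uses \eqref{e:stima-grad-F-in-p} and \eqref{e:stima-derivata-F-in-s} to absorb the perturbation terms, and obtains the Caccioppoli inequalities on super/sublevel sets straight from the variational structure, after which De~Giorgi--Nash--Moser closes the argument. Both routes end in the same iteration; yours is cleaner in PDE language but leans on the extra structural hypothesis (ii) ($\nabla_pF(x,\cdot)\in C^1$) to even write $M(x)$, whereas the paper's truncation argument needs only the growth bounds. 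A small plus of your write-up is that you explicitly flag the exponent-matching issue (the De~Giorgi exponent vs.\ the target $\alpha$) and resolve it via a Campanato-type perturbation off the Laplacian; the paper leaves this implicit.

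One step in your proposal needs repair. You assert that ``$\nabla_p^2F(x,p)$ is small uniformly in $x$ for $|p|\le\delta$'' citing (i) and (ii), but (i) only controls the H\"older seminorm \emph{in $x$} of $\nabla_pF(\cdot,p)$, not the modulus of continuity of $\nabla_p^2F(x,\cdot)$ in $p$, and (ii) gives $\nabla_pF(x,\cdot)\in C^1$ only pointwise in $x$; neither yields the required $x$-uniformity of the continuity of $\nabla_p^2F$ at $p=0$. The fix is to bypass the Hessian entirely: by \eqref{e:stima-grad-F-in-p} and $\nabla_pF(x,0)=0$ one has $|\nabla_pF(x,\nabla u(x))|\le C|\nabla u(x)|^{1+\gamma}\le C\delta^{\gamma}|\nabla u(x)|$, so setting
\[
M(x):=\frac{\nabla_pF(x,\nabla u(x))\otimes\nabla u(x)}{|\nabla u(x)|^2}\ \text{ where }\ \nabla u(x)\neq0,\qquad M(x):=0\ \text{ otherwise,}
\]
gives a measurable $M$ with $\|M\|_{L^\infty(B)}\le C\delta^\gamma$ and $\nabla_pF(x,\nabla u)=M\nabla u$. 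Then $A=I+M$ is uniformly elliptic with ratio as close to $1$ as desired by taking $\delta_0$ small (symmetry of $A$ is not needed for the divergence-form Caccioppoli/De~Giorgi argument), and the rest of your argument goes through as written.
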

\begin{proof}
Testing the outer variation for \eqref{eqn:quasilinearFunctionalF-general} with the competitors
\[
\varphi^{\pm} \coloneqq \eta^2 \left( u - k \right)^{\pm}, \quad k \in \R,
\]
for some smooth cutoff function $\eta:B_r(x_0) \to \R$, with computations analogous to \cref{section:OuterAndInnerVariations} it is possible to choose $\delta_0 = \delta_0(d)>0$ sufficiently small so that, thanks to \eqref{e:stima-grad-F-in-p} and \eqref{e:stima-derivata-F-in-s}, the Caccioppoli inequalities on super/sublevel sets are satisfied:
\begin{align*}
\int_{ \{u > k\} \cap B_{\rho}(x_0)} \vert \nabla u \vert^2 \le \frac{C}{(r- \rho)^2} \int_{ \{u > k\} \cap B_r(x_0)} (u-k)^2, \\
\int_{ \{u < k\} \cap B_{\rho}(x_0)} \vert \nabla u \vert^2 \le \frac{C}{(r- \rho)^2} \int_{ \{u < k\} \cap B_r(x_0)} (u-k)^2.
\end{align*}
for some constant $C=C(d, \delta_0)>0$ and all balls $B_{\rho}(x_0) \Subset B_r(x_0) \Subset B$. The lemma now follows from the De Giorgi-Nash-Moser iterations.
\end{proof}
\begin{lemma}\label{lemma:C0aGradU-L2LinearBound}
    Let $u$, $\delta$, $\delta_0$, $C$ and $B_r(x)$ be as in \cref{prop:C1a-L2LinearBounds}. Moreover, suppose that for some linear function $l:\R^d \to \R$
    \begin{equation*}
        \left\| u - u(x) - l \right\|_{L^{\infty}(B_r(x))} \le r \varepsilon
    \end{equation*}
    and that
    \begin{equation}\label{eqn:QNC1aLinearxHp}
        \left\| \nabla_p F( \cdot, \nabla l) - \nabla_p F(x, \nabla l) \right\|_{L^{\infty}(B_r(x))} \le o(1) \varepsilon \quad \text{as } \delta \to 0
    \end{equation}
    Then, there exist constants $\delta_0 = \delta_0(d)$, $\varepsilon_0 = \varepsilon_0(d, \delta_0)>0$, $\rho=\rho(d, \delta_0) \in (0, 1)$, $\widetilde{C}=\widetilde{C}(d)>0$ and a linear function $l':\R^d \to \R$ such that 
    \begin{equation*}
        \| u(y) -  u(x) - l' \|_{L^{\infty}\left( B_{\rho r}(x) \right)} \le \widetilde{C} \rho^{1+\alpha} \| u(y) -  u(x) - l \|_{L^{\infty}\left( B_{r}(x) \right)}
    \end{equation*}
    and
    \begin{equation*}
        \| \nabla l' - \nabla l \| \le \widetilde{C} \varepsilon.
    \end{equation*}
\end{lemma}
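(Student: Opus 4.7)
The plan is an improvement-of-flatness argument via contradiction and compactness. Suppose the conclusion fails: for every admissible choice of $\widetilde C$ and $\rho$ one finds sequences $u_n,r_n,x_n,l_n,\varepsilon_n$ with $\|u_n\|_{C^{1,\alpha}}\le\delta_n\to 0$ (and $\varepsilon_n\le\varepsilon_0$) satisfying the hypotheses, but admitting no admissible $l'$. Rescale by setting
\[
w_n(y):=\frac{u_n(x_n+r_n y)-u_n(x_n)-l_n(r_n y)}{r_n\varepsilon_n},\qquad y\in B_1,
\]
so that $w_n(0)=0$ and $\|w_n\|_{L^\infty(B_1)}\le 1$. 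From $\nabla u_n(x_n+r_n y)=\nabla l_n+\varepsilon_n\nabla w_n(y)$, the bound $\|u_n\|_{C^{1,\alpha}}\le\delta_n$, and $|\nabla l_n|\lesssim \delta_n+\varepsilon_n$ (which follows from comparing $u_n-u_n(x_n)$ with $l_n$ in $L^\infty$), both $\nabla l_n$ and $\varepsilon_n\nabla w_n$ are uniformly $O(\delta_n+\varepsilon_0)$, hence arbitrarily small provided $\delta_0$ and $\varepsilon_0$ are small.

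I then derive the weak PDE for $w_n$ from the Euler-Lagrange equation $\diverg(\nabla u_n+\nabla_pF(x,\nabla u_n))=0$. Testing against $\eta(x)=\psi((x-x_n)/r_n)$ for $\psi\in C_c^\infty(B_1)$, substituting, subtracting the divergence-free constant $\nabla l_n+\nabla_pF(x_n,\nabla l_n)$, and dividing by $\varepsilon_n$, one obtains
\[
\int_{B_1}\nabla w_n\cdot\nabla\psi\,dy+\int_{B_1}\frac{E_n^x(y)+E_n^p(y)}{\varepsilon_n}\cdot\nabla\psi\,dy=0,
\]
where $E_n^x(y):=\nabla_pF(x_n+r_n y,\nabla l_n)-\nabla_pF(x_n,\nabla l_n)$ and $E_n^p(y):=\nabla_pF(x_n+r_n y,\nabla l_n+\varepsilon_n\nabla w_n)-\nabla_pF(x_n+r_n y,\nabla l_n)$. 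The assumption \eqref{eqn:QNC1aLinearxHp} gives $|E_n^x|/\varepsilon_n=o(1)$ uniformly. The mean value theorem in $p$ yields $E_n^p=\varepsilon_n M_n\nabla w_n$, where $M_n$ is an average of $\nabla_p^2F$ at slopes of size $O(\delta_n+\varepsilon_0)$; because $\nabla_p^2 F(x,0)=0$ by (ii) with $\nabla_pF$ being $C^1$ in $p$, one has $|M_n|\le\omega(\delta_n+\varepsilon_0)$ for some modulus of continuity $\omega$ with $\omega(0^+)=0$. Finally, a Caccioppoli-type inequality applied to $u_n-u_n(x_n)-l_n$, analogous to the one in the proof of \cref{lemma:C0aU-L2LinearBound}, provides the uniform energy bound $\|\nabla w_n\|_{L^2(B_{1/2})}\le C$.

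Along a subsequence, $w_n$ converges weakly in $W^{1,2}(B_{1/2})$ and strongly in $L^2(B_{1/2})$ to some $w_\infty$ with $\|w_\infty\|_{L^\infty(B_{1/2})}\le 1$ and $w_\infty(0)=0$. Passing first $n\to\infty$ and then $\varepsilon_0\to 0$ (which is allowed since $\varepsilon_0$ depends only on $d$), both error integrals vanish and $w_\infty$ is harmonic in $B_{1/2}$. Classical $C^{1,\alpha}$ interior estimates for harmonic functions yield $C_0=C_0(d,\alpha)$ with
\[
\|w_\infty-\nabla w_\infty(0)\cdot y\|_{L^\infty(B_\rho)}\le C_0\rho^{1+\alpha}\quad\text{and}\quad|\nabla w_\infty(0)|\le C_0.
\]
Setting $\widetilde C:=2C_0$ and choosing $\rho$ small, locally uniform convergence $w_n\to w_\infty$ in $B_\rho$ — justified by interior $C^{1,\beta}$ estimates for the rescaled equation and the uniform $W^{1,2}$ bound — transfers this estimate to $w_n$ for large $n$. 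Undoing the rescaling, the linear function $l_n'$ with $\nabla l_n':=\nabla l_n+\varepsilon_n\nabla w_\infty(0)$ satisfies both $|\nabla l_n'-\nabla l_n|\le\widetilde C\varepsilon_n$ and the required $L^\infty$ decay on $B_{\rho r_n}(x_n)$, contradicting the choice of the sequence. The main obstacle is the limit passage in the nonlinear term: the factor $M_n$ in $E_n^p/\varepsilon_n=M_n\nabla w_n$ becomes negligible only through the combined use of hypothesis (ii) and the a priori slope smallness $|\nabla l_n+t\varepsilon_n\nabla w_n|=O(\delta_n+\varepsilon_0)$ inherited from the $C^{1,\alpha}$ bound on $u_n$; without hypothesis (ii) the limiting operator would be a nontrivial constant-coefficient linear elliptic operator rather than the Laplacian, and the harmonic approximation step would need to be adjusted accordingly.
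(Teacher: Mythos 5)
Your argument is correct and follows essentially the same route as the paper's proof: rescale to $w_n$, derive the linearized weak equation and a Caccioppoli bound, pass to a harmonic blow-up limit via compactness, invoke interior estimates for harmonic functions, and transfer the decay back to $u_n$. The one small imprecision is that the locally uniform convergence $w_n \to w_\infty$ cannot be justified by interior $C^{1,\beta}$ estimates for the rescaled equation (the perturbed coefficients are only controlled in $L^\infty$), but rather by the De Giorgi--type $C^{0,\alpha}$ bound from \cref{lemma:C0aU-L2LinearBound}, exactly as the paper does; this weaker $C^{0,\alpha}$ convergence is all that is needed to transfer the $L^\infty$ decay estimate.
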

\begin{proof}
 We proceed by contradiction in $\varepsilon$ and $\delta$, via a linearizaion argument.    

 Suppose that there exist a sequence of local minimizers $u_k$ to \eqref{eqn:quasilinearFunctionalF-x-p}, radii $r_k \to 0$, points $x_k \in B$, constants $\delta_k, \varepsilon_k \to  0$ and linear functions $l_k$ satisfying
 \begin{equation*}
        \left\| u_k - u_k(x_k) - l_k \right\|_{L^{\infty}(B_{r_k}(x_k))} = r_k \varepsilon_k
\end{equation*}
 but
    \begin{equation}\label{eqn:QNC1aContradLinear}
        \| u_k(y) -  u_k(x_k) - l_k' \|_{L^{\infty}\left( B_{\rho r_k}(x_k) \right)} \ge k \rho_k^{1+\alpha} \| u_k(y) -  u_k(x_k) - l_k \|_{L^{\infty}\left( B_{r_k}(x_k) \right)}
    \end{equation}
    for every $\rho \in (0, 1)$ and linear function $l'$ with 
    \begin{equation}\label{eqn:QNC1aContradNablalDist}
    \| \nabla l' - \nabla l_k \| \le c \varepsilon_k,    
    \end{equation}
    for some large but fixed constand $c=c(d)>0$.

Without loss of generality we can assume that $x_k=0$ and, by translation invariance, also that $u_k(x_k)=0$. Let us introduce the linearized functions $w_k:B \to \R$ defined as
 \[
 w_k(x) \coloneqq  \frac{u_k(r_k x ) - l_k(r_k x)}{r_k \varepsilon_k}.
 \]
 The functions $w_k$ solve
\begin{align}\label{eqn:QNC1aLinearrizationEqn}
    \begin{split}
        \int_{B} \nabla w_k \cdot \nabla \psi +  \frac{1}{\varepsilon_k} \int_{B} \left[ \nabla_p F(r_k x , \nabla l_k + \varepsilon_k \nabla w_k) - \nabla_p F(0, \nabla l_k)  \right]\cdot \nabla \psi = 0, 
    \end{split}
\end{align}
for all functions $\psi \in C_c^{\infty}(B)$, and we have the estimate
\begin{align*}
    \begin{split}
        & \left\vert \frac{1}{\varepsilon_k} \left[ \nabla_p F(r_k x, \nabla l_k + \varepsilon_k \nabla w_k) - \nabla_p F(0, \nabla l_k)  \right] \right\vert \le \\
        & \le  \frac{1}{\varepsilon_k} \left[ \left\vert \nabla_p F( r_k x, \nabla l_k) - \nabla_p F(0, \nabla l_k) \right\vert + \left\vert \nabla_p F\left( r_k x, \nabla l +  \varepsilon_k \nabla w_k \right) - \nabla_p F(r_k x, \nabla l_k) \right\vert \right] \\
        & \le o(1) + \left\| \nabla_p^2 F(x, \cdot) \right\|_{L^{\infty}\left(B_{\delta_0}\right)} \vert \nabla w_k(x) \vert \\
        & \le o(1) \left[ 1 + \vert \nabla w_k(x) \vert \right] \quad \text{as } \delta \to 0,
    \end{split}
\end{align*}
where we have used \eqref{eqn:QNC1aLinearxHp} and \eqref{eqn:furtherAssC1p}. In particular, the above estimate combined with \eqref{eqn:QNC1aLinearrizationEqn} implies the Caccioppoli inequality for the functions $w_k$. Hence, using also \cref{lemma:C0aU-L2LinearBound}, the linearizations $w_k$ converge weakly in $H^1(K)$ and in $C^{0, \alpha}(K)$, for all $K \Subset B$, to a limit function $w_{\infty}$ which is harmonic in $B$, and satisfies $w_{\infty}(0) = 0$ and $\| w_{\infty} \|_{L^{\infty}(B)} \le 1$. Consequently, the function $w_{\infty}$ satisfies
\begin{equation}\label{eqn:QNC1aLinearLimitHarmonicEst}
\| w_{\infty} - \nabla w_{\infty} \cdot x \|_{L^{\infty}(B_{\rho})} \le C \rho^{1+\alpha} \quad \text{for all } \rho \in (0, 1/2),
\end{equation}
where the constant $C=C(d)>0$.

Now, as a consequence of \eqref{eqn:QNC1aLinearLimitHarmonicEst} and the $C^{0, \alpha}$ convergence of $w_k$ to $w_{\infty}$, for $k$ sufficiently large there exist constants $C=C(d)>0$ and $\rho=\rho(d)$ such that
\begin{equation}\label{eqn:QNC1aEstContrad}
\left\| u_k(r_k x) - \left( \nabla l_k - \varepsilon_k \nabla w_{\infty}(0) \right) \cdot x \right\|_{L^{\infty}(B_{\rho})}  \le C \varepsilon_k \rho^{1+\alpha} \quad \text{in } B_{\rho}
\end{equation}
Since \eqref{eqn:QNC1aEstContrad} is in contradiction with \eqref{eqn:QNC1aContradLinear} as long as $\vert \nabla w_{\infty}(0) \vert \le c$ (where $c$ is the constant from \eqref{eqn:QNC1aContradNablalDist}), the proof is concluded.
\end{proof}
We are ready for the
\begin{proof}[Proof of \cref{prop:C1a-L2LinearBounds}.]
The linear bound for $u$ in \eqref{e:linear-bounds} follows directly from \cref{lemma:C0aU-L2LinearBound}.
In order to prove the linear bound on the gradient, we take any point $y\in B_{r/2}(x)$ and we choose $\delta$ in such a way that 
$$\left\| u - u(y) \right\|_{L^{\infty}(B_{r/4}(y))}\le \frac{r}{4}\eps_0,$$
where $\varepsilon_0$ is the constant from \cref{lemma:C0aGradU-L2LinearBound}. Thus, by iterating \cref{lemma:C0aGradU-L2LinearBound} (which is possible thanks to \eqref{eqn:furtherAssC0a}) on a sequence of balls $r_k:=\rho^kr/4$, where we choose $\rho$ such that $\tilde C\rho^{\alpha/2}\le 1$, we obtain a sequence of linear functions $l_k$ such that $l_0\equiv 0$ and 
\begin{align*} 
\frac1{r_k}\left\| u - u(y) - l_k \right\|_{L^{\infty}(B_{r_k}(y))}& \le \rho^{k\alpha/2}\frac1{r/4}\left\| u - u(y)\right\|_{L^{\infty}(B_{r/4}(y))}\\
&\le \rho^{k\alpha/2} \frac{C}{r^{\sfrac{(d+2)}{2}}}\left\| u - u(y) \right\|_{L^{2}(B_{r/2}(y))}.
\end{align*}
Moreover, 
$$|\nabla \ell_{k+1}-\nabla \ell_k|\le \widetilde C\rho^{k\alpha/2} \frac{C}{r^{\sfrac{(d+2)}{2}}}\left\| u - u(y) \right\|_{L^{2}(B_{r/2}(y))},$$
which implies that 
$$|\nabla u(y)|\le\sum_{k=0}^{\infty}|\nabla \ell_{k+1}-\nabla \ell_k|\le \frac{\widetilde C}{1-\rho^{\alpha/2}} \frac{C}{r^{\sfrac{(d+2)}{2}}}\left\| u - u(y) \right\|_{L^{2}(B_{r/2}(y))}.$$
Finally, we notice that 
\begin{align*}
\frac{1}{r^{\sfrac{d}{2}}}\left\| u - u(y) \right\|_{L^{2}(B_{r/2}(y))}&\le C_d|u(y)|+\frac{1}{r^{\sfrac{d}{2}}}\left\| u\right\|_{L^{2}(B_{r/2}(y))}\\
&\le C_d\|u\|_{L^{\infty}(B_{r/2}(x))}+\frac{1}{r^{\sfrac{d}{2}}}\left\| u\right\|_{L^{2}(B_{r/2}(y))}\le \frac{C}{r^{\sfrac{d}{2}}}\left\| u\right\|_{L^{2}(B_{r}(x))},
\end{align*}
where in the last inequality we used the linear bound for $u$ from \eqref{e:linear-bounds}. Thus, we get 
$$|\nabla u(y)|\le \frac{C}{r^{\sfrac{(d+2)}{2}}}\left\| u \right\|_{L^{2}(B_{r}(x))},$$
which implies the second bound in \eqref{e:linear-bounds}.
\end{proof}

\section{Outer and inner variations}\label{section:OuterAndInnerVariations}

In this section we compute the outer and inner variations of the functional \eqref{eqn:quasilinearFunctionalF}, centered at the local minimizer $u$. We use a strategy analogous to  \cite{FerreriSpolaorVelichkov2024:BoundaryBranchingOnePhaseBernoulli, DeLellisSpadaro2016:AreaMinimizingCurrents3BlowUp}. Let the function $\varphi:\R^+ \to \R^+$ be defined as
\begin{equation}\label{eqn:QNCutoffDef}
\varphi(x) \coloneqq
\begin{cases}
    1 & \text{if } x \in (0, 1-\upsilon], \\
    \frac{1-x}{1-\upsilon}  & \text{if } x \in (1-\upsilon, 1], \\
    0 & \text{if } x \in (1, +\infty),
\end{cases}
\end{equation}
for some $\upsilon \in (\sfrac{1}{2}, 1)$ and define the rescaled cutoff functions
\begin{equation}\label{eqn:psiCutoffBr}
\psi_r:\R^d\to\R^+\ ,\qquad \psi_r(x) \coloneqq \varphi\left( \frac{\vert x \vert}{r} \right).
\end{equation}
We introduce the height function,
\begin{equation}\label{eqn:quasilinearFrequencyHeightDef}
    H(r) \coloneqq - \int \varphi'\left( \frac{\vert x \vert }{r} \right) \frac{u(x)^2}{\vert x \vert}
\end{equation}
the energy terms $D_0(r)$, $D_l(r)$ defined as
\begin{align}
    D_0(r) \coloneqq & \int \varphi\left( \frac{\vert x \vert }{r} \right) \left\vert \nabla u \right\vert^2, \label{eqn:quasilinearEnergyD0Def} \\
    D_l(r) \coloneqq &  \int \varphi\left( \frac{\vert x \vert }{r} \right) u\,\partial_sF(x,u,\nabla u) , \label{eqn:quasilinearEnergyDlDef}
\end{align}
and the energy function
\begin{equation}\label{eqn:quasilinearEnergyDef}
    D(r) \coloneqq D_0(r)+D_l(r).
\end{equation}
For notational convenience, let us also define the quantities
\begin{align}
    G(r) \coloneqq & \int \varphi\left( \frac{\vert x \vert }{r} \right) u^2, \label{eqn:quasilinearFrequencyL2Def}\\
    A(r) \coloneqq & - \int \varphi'\left( \frac{\vert x \vert }{r} \right) \vert x \vert \left( \nabla u \cdot \frac{x}{\vert x \vert}\right)^2, \label{eqn:quasilinearFrequencyADef}\\
    B(r) \coloneqq & - \int \varphi'\left( \frac{\vert x \vert }{r} \right) u \, \left(\nabla u \cdot \frac{x}{|x|}\right)\,.\label{eqn:quasilinearFrequencyBDef}
\end{align}
The main content of this section is the following
\begin{lemma}[]\label{lem:quasilinearFreqIdentities}
For all $r \in (0, 1)$, 
we have
\begin{equation}\label{eqn:QN-HDerivative}
H'(r) - \frac{d-1}{r} H(r) - \frac{2}{r} B(r) = 0\,,
\end{equation}
\begin{equation}\label{eqn:QN-OuterVariationD}
D(r) - \frac{1}{r} B(r)+ e_O(r)=0\,,
\end{equation}
\begin{equation}\label{eqn:QN-DDerivative}
(d-2) D(r) - r D'(r) + \frac{2}{r} A(r)
+ e_{I}(r)  = 0\,,
\end{equation}
where the errors $e_O, e_I$ (produced by the outer and the inner variations) are defined as
\begin{align}
e_O(r)& \coloneqq \sum_{k=1}^4 E^{o,k}(r),\label{e:quasilinearEO}\\
e_I(r)& \coloneqq 2\sum_{k=1}^4 E^{i, k}(r).\label{e:quasilinearEI}
\end{align}
and for some constant $C=C(d, \delta_0)$ the following estimates hold, with $\kappa = \gamma/2$:
\begin{align}
|E^{o,1}(r)|&\le C\int \varphi\left( \frac{\vert x \vert}{r}\right) |u|^{2+\kappa}, \label{eq:QNE{o,1}}\\
|E^{o,2}(r)|&\le C\int \varphi\left( \frac{\vert x \vert}{r}\right)|\nabla u|^{2+\kappa}, \label{eq:QNE{o,2}}\\
|E^{o,3}(r)|&\le - C\int \varphi'\left( \frac{\vert x \vert}{r}\right) |u|^{2+\kappa}, \label{eq:QNE{o,3}}\\
|E^{o,4}(r)|&\le - C\int \varphi'\left( \frac{\vert x \vert}{r}\right)|u| |\nabla u|^{1+\kappa}. \label{eq:QNE{o,4}}
\end{align}
\begin{align}
|E^{i,1}(r)|&\le C\int \varphi\left( \frac{\vert x \vert}{r}\right) |u|^2, \label{eq:QNE{i,1}}\\
|E^{i,2}(r)|&\le C\int \varphi\left( \frac{\vert x \vert}{r}\right)|\nabla u|^{2+\kappa}, \label{eq:QNE{i,2}}\\
|E^{i,3}(r)|&\le - C\int \varphi'\left( \frac{\vert x \vert}{r}\right) |u|^2, \label{eq:QNE{i,3}}\\
|E^{i,4}(r)|&\le - C\int \varphi'\left( \frac{\vert x \vert}{r}\right)|\nabla u|^{2+\kappa}. \label{eq:QNE{i,4}}
\end{align}
\end{lemma}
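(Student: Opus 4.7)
The plan is to prove the three identities independently: \eqref{eqn:QN-HDerivative} by kinematic differentiation of $H$, \eqref{eqn:QN-OuterVariationD} from an outer variation of $\mathcal{F}$, and \eqref{eqn:QN-DDerivative} from an inner variation of $\mathcal{F}$. The eight error bounds then follow from the polynomial growth assumptions on $F$ and Young's inequality, combined with the smallness $|u|,|\nabla u|\le\delta_0$ from \cref{assumptions:QNuC1aAprioriEst}.

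First I would establish the height identity \eqref{eqn:QN-HDerivative} by direct differentiation, using no Euler--Lagrange information. Rescaling via $y=x/r$ gives
\[
H(r) = -r^{d-1}\int \varphi'(|y|)\,\frac{u(ry)^2}{|y|}\,dy,
\]
and differentiating in $r$ produces two terms: the $r^{d-1}$ factor contributes $\tfrac{d-1}{r}H(r)$, while the interior differentiation of $u(ry)$ gives, after changing back to the $x$ variable, exactly $\tfrac{2}{r}B(r)$.

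Next I would prove \eqref{eqn:QN-OuterVariationD} by testing minimality with the admissible competitor $u_t := (1+t\psi_r)u$. Setting $\tfrac{d}{dt}\big|_{t=0}\mathcal{F}(u_t)=0$ yields
\begin{align*}
0 = \int \psi_r|\nabla u|^2 &+ \int u\,\nabla u\cdot\nabla\psi_r + \int \psi_r\,u\,\partial_s F \\
&+ \int \psi_r\,\nabla_p F\cdot\nabla u + \int u\,\nabla_p F\cdot\nabla\psi_r,
\end{align*}
where $u\,\partial_s F$ is understood as $f'_{x,u,\nabla u}(0)$. Using $\nabla\psi_r = r^{-1}\varphi'(|x|/r)\hat{x}$, the first three integrals collapse to $D_0(r)-\tfrac{1}{r}B(r)+D_l(r)=D(r)-\tfrac{1}{r}B(r)$, and the last two are identified with $e_O(r)$.

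For \eqref{eqn:QN-DDerivative} I would use the inner variation driven by the vector field $X(x) := \psi_r(x)\,x$, testing minimality with $u_t := u\circ\Phi_t^{-1}$ where $\Phi_t := \mathrm{id} + tX$. The stationarity condition reads
\[
\int\bigl[\nabla_x L\cdot X + L\,\operatorname{div}X - \partial_{p_j}L\,\partial_i X^j\,\partial_i u\bigr] = 0.
\]
With $\operatorname{div}X = d\psi_r + r^{-1}\varphi'(|x|/r)|x|$ and $\partial_i X^j = (\partial_i\psi_r)x^j + \psi_r\delta^j_i$, the purely quadratic part $\tfrac{1}{2}|\nabla u|^2$ of $L$ contributes
\[
\tfrac{d-2}{2}D_0(r) + \tfrac{1}{r}A(r) - \tfrac{r}{2}D_0'(r),
\]
where I used the identity $\tfrac{1}{r}\int\varphi'(|x|/r)|x||\nabla u|^2 = -rD_0'(r)$; multiplying by $2$ yields the quadratic part of \eqref{eqn:QN-DDerivative}. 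The $F$-dependent contributions $\int\nabla_x F\cdot X$, $\int F\,\operatorname{div}X$, $\int(x\cdot\nabla_p F)(\nabla\psi_r\cdot\nabla u)$, and $\int\psi_r\,\nabla_p F\cdot\nabla u$, together with the rearrangement yielding $(d-2)D_l - rD_l'$, form $e_I(r)$.

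Finally I would verify each of the eight error bounds. In each case I would invoke whichever of \eqref{e:stima-grad-F-in-X}, \eqref{e:stima-grad-F-in-p}, \eqref{e:stima-derivata-F-in-s} applies to the integrand, and then reduce each product $|u|^a|\nabla u|^b$ (with $a+b\ge 2+\gamma$) to $|u|^{2+\kappa}+|\nabla u|^{2+\kappa}$, or directly to $|u|^2+|\nabla u|^{2+\kappa}$ when the $|s|^2$ hypothesis from \eqref{e:stima-grad-F-in-X}--\eqref{e:stima-derivata-F-in-s} is used; here $\kappa=\gamma/2$ and the reduction uses Young's inequality together with the smallness of $u$ and $\nabla u$. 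Whether the resulting bound carries $\varphi(|x|/r)$ or $-\varphi'(|x|/r)$ depends on whether the term originates from the bulk factor $\psi_r$ or from the boundary factor $\nabla\psi_r$ (respectively $\tfrac{|x|}{r}\varphi'(|x|/r)$ inside $\operatorname{div}X$).

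The hard part will be the bookkeeping for $e_I$: in the inner variation, $F$ enters both through $L\,\operatorname{div}X$ and through $\nabla_p F\cdot\partial_i X^j \partial_i u$, and each of these produces both a bulk ($\psi_r$) and a boundary ($\varphi'$) piece. Sorting all contributions correctly into the four categories $E^{i,k}$, while absorbing the rearrangement $(d-2)D_l - rD_l'$, requires careful accounting, though the analytic ingredients are elementary.
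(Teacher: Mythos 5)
Your proposal is correct and follows essentially the same route as the paper: the height identity by direct differentiation (you use a rescaling, the paper a divergence identity, but the computation is the same), the outer variation with the competitor $(1+t\psi_r)u$, and the inner variation with the flow $\mathrm{id}+\varepsilon\psi_r x$, followed by the absorption of the $F$-dependent pieces and the $D_l$-rearrangement into the error terms $E^{o,k}$, $E^{i,k}$ using \eqref{e:stima-grad-F-in-X}--\eqref{e:stima-derivata-F-in-s}, Young's inequality, and the smallness of $u$ and $\nabla u$. Your accounting of which errors carry $\varphi$ versus $\varphi'$ (bulk $\psi_r$ versus the $\nabla\psi_r$ and $|x|\varphi'$ pieces) matches the paper's decomposition.
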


\subsection*{Height derivative: proof of  \eqref{eqn:QN-HDerivative}}
 We can rewrite
    \[
     H(r) =\int_{\R^d} r\varphi\left( \frac{\vert x \vert }{r} \right) \diverg\left( \frac{x}{|x|} \frac{u^2(x)}{\vert x \vert}\right),
    \]
    so that a direct computation gives
    \begin{align*}
    H'(r)
    &=\frac{H(r)}{r}-\frac{2}{r} \int\varphi'\left( \frac{\vert x \vert }{r} \right) u\nabla u\cdot \frac{x}{|x|}-\frac{1}{r} \int |x|\varphi'\left( \frac{\vert x \vert }{r} \right) u^2(x) \diverg\left(\frac{x}{|x|^2}\right) = \\
    &= \frac{d-1}{r} H(r) - \frac{2}{r} \int\varphi'\left( \frac{\vert x \vert }{r} \right) u\left(\nabla u\cdot \frac{x}{|x|}\right)\\
     &= \frac{d-1}{r} H(r) + \frac{2}{r} B(r),
    \end{align*}
    where in the last identity we have used that
    \[
    \diverg\left(\frac{x}{|x|^2}\right)=\frac{d-2}{|x|^2}.
    \]
This concludes the proof of \eqref{eqn:QN-HDerivative}.\qed

\subsection*{Outer variation: proof of \eqref{eqn:QN-OuterVariationD}}

By a direct computation
\begin{align*}
    0&=\frac{d}{dt}\Big|_{t=0}\mathcal F(u+t\,\psi_r\,u;r)\\
    &=\frac{d}{dt}\Big|_{t=0}\int\left(\frac12|\nabla(u+t\,\psi_r\,u)|^2+F(x,u+t\,\psi_r\,u,\nabla(u+t\,\psi_r\,u))\right)\\
     &=\int\Big(\nabla u\cdot\nabla (\psi_r\,u)+\psi_ru\,\partial_sF(x,u,\nabla u)+\nabla(\psi_r\,u)\cdot\nabla_pF(x,u,\nabla u)\Big)\\
      &=\int\Big(\nabla u\cdot\nabla (\psi_r\,u)+\psi_ru\,\partial_sF(x,u,\nabla u)\\
      &\qquad +\psi_r\nabla u\cdot\nabla_pF(x,u,\nabla u)+\frac{u}{r}\phi'\left(\frac{|x|}{r}\right)\frac{x}{|x|}\cdot \nabla_pF(x,u,\nabla u)\Big)\\
    &=\int \varphi\left( \frac{\vert x \vert}{r} \right) \left\vert \nabla u \right\vert^2 + \int \varphi\left( \frac{\vert x \vert }{r} \right) u\,\partial_sF(x,u,\nabla u) + \frac{1}{r} \int \varphi'\left( \frac{\vert x \vert}{r} \right) u \nabla u \cdot \frac{x}{\vert x \vert} + \sum_{k=1}^2 e^{o, k}(r)\\
    &=D(r)-\frac1rB(r)+ \sum_{k=1}^2e^{o, k}(r)\,.
\end{align*}
where we have introduced the errors
\begin{align*}
    e^{o, 1}(r) & \coloneqq  \int \varphi\left(\frac{|x|}{r}\right) \nabla  u\cdot\nabla_pF(x,u,\nabla u), \\
    e^{o, 2}(r) & \coloneqq  \frac{1}{r}\int \varphi'\left(\frac{|x|}{r}\right) u  \nabla_p F(x,u,\nabla u) \cdot \frac{x}{|x|}.
\end{align*}
Now, using the bounds \eqref{e:stima-grad-F-in-X}, \eqref{e:stima-grad-F-in-p} and \eqref{e:stima-derivata-F-in-s} we have the estimates
\begin{align*}
\Big| \nabla  u\cdot\nabla_pF(x,u,\nabla u) \Big|
&\le C(|\nabla u| |u|^{1+\gamma}+|\nabla u|^{2+\gamma}) \\
&\le C\left(|u|^{2+\gamma/2}+|\nabla u|^{2+\gamma/2}\right),
\end{align*}
\begin{align*}
\Big| u \nabla_pF(x,u,\nabla u) \Big|
&\le C(|u| |\nabla u|^{1+\gamma}+|u|^{2+\gamma}).
\end{align*}
Hence, there exist error terms $E^{o, k}(r)$ with $k=1, ..., 4$ with estimates
\begin{align*}
|E^{o,1}(r)|&\le C\int \varphi\left( \frac{\vert x \vert}{r}\right) |u|^{2+\gamma/2}, \\
|E^{o,2}(r)|&\le C\int \varphi\left( \frac{\vert x \vert}{r}\right)|\nabla u|^{2+\gamma/2}, \\
|E^{o,3}(r)|&\le - C\int \varphi'\left( \frac{\vert x \vert}{r}\right) |u|^{2+\gamma}, \\
|E^{o,4}(r)|&\le - C\int \varphi'\left( \frac{\vert x \vert}{r}\right)|u| |\nabla u|^{1+\gamma}.
\end{align*}
such that
\[
D(r)-\frac1rB(r)+ \sum_{k=1}^4 E^{o, k}(r) = 0.
\]
The above computations give the proof of \eqref{eqn:QN-OuterVariationD}, \eqref{eq:QNE{o,1}}, \eqref{eq:QNE{o,2}}, \eqref{eq:QNE{o,3}} and \eqref{eq:QNE{o,4}}.\qed

\subsection*{Inner variation: proof of \eqref{eqn:QN-DDerivative}}
Let $T_{\varepsilon}:B_r \to B_r$ be the family of diffeomorphisms defined as
\[
T_{\varepsilon}(x) \coloneqq x + \varepsilon \psi_r(x) x,
\]
with $\psi_r$ as in \eqref{eqn:psiCutoffBr}. Let $u_{\varepsilon}$ be defined as
\[
u_{\varepsilon} \coloneqq u \circ T_{\varepsilon}^{-1}.
\]
By changing coordinates and differentiating in $\eps$, we get:
\begin{align*}
    \begin{split}
        \int \left(\frac12|\nabla u_\eps|^2+F\left( x, u_{\varepsilon}, \nabla u_{\varepsilon} \right)\right)
        &= \int \left(\frac12|D(T_{\varepsilon}^{-1})[\nabla u \circ T_{\varepsilon}^{-1}]|^2+F\left( x, u \circ T_{\varepsilon}^{-1}, D(T_{\varepsilon}^{-1})[\nabla u \circ T_{\varepsilon}^{-1}]\right) \right)\\
        &= \int \left(\frac12|(DT_{\varepsilon})^{-1}\nabla u|^2+F\left( T_{\varepsilon}, u, (DT_{\varepsilon})^{-1} \nabla u \right)\right) |\det(DT_\varepsilon)| \\
        &= \int \left(\frac12|\nabla u|^2+F(x,u,\nabla u)\right) \\
        &\qquad+\eps\int \Big(\frac12|\nabla u|^2{\rm div}(x\psi_r)-\nabla u\cdot D(\psi_r\,x)\nabla u\Big)\\
        &\qquad\qquad +\eps\int F(x,u,\nabla u)\,{\rm div}(x\psi_r)\\
        & \qquad\qquad\qquad + \eps\int  \psi_r x\cdot \nabla_x F\left( x, u, \nabla u \right)  \\
        & \qquad\qquad\qquad \qquad-\eps\int\nabla_p F\left( x, u, \nabla u \right) \cdot D(\psi_r x) \nabla u  +o(\eps).
    \end{split}
\end{align*}
Now, a standard computation gives
\begin{align*}
\int \Big(\frac12|\nabla u|^2{\rm div}(x\psi_r)-\nabla u\cdot D(\psi_r\,x)\nabla u\Big)&=\frac{d-2}{2} \int \varphi\left( \frac{\vert x \vert}{r} \right) |\nabla u|^2 + \frac{1}{2r} \int \varphi'\left( \frac{\vert x \vert}{r} \right) \vert x \vert |\nabla u|^2  \\
    &\qquad  - \frac{1}{r} \int \varphi'\left( \frac{\vert x \vert}{r} \right) \vert x \vert \left( \nabla u \cdot \frac{x}{\vert x \vert} \right)^2\\   
    &=\frac{d-2}{2} D_0(r) -\frac{r}{2}D_0'(r) +\frac1rA(r)\\
    &=\frac{d-2}{2} D(r) -\frac{r}{2}D'(r) +\frac1rA(r)-\frac{d-2}{2} D_l(r) +\frac{r}{2}D_l'(r).
    \end{align*}
The above computations imply 
\begin{align*}
0=\frac{d}{d\varepsilon}\Big|_{\varepsilon=0}\mathcal F(u_{\eps};r) = \frac{d-2}{2} D(r) -\frac{r}{2}D'(r) +\frac1rA(r) + e^{i,1}(r)+e^{i,2}(r)+\frac{r}{2}D_l'(r),
\end{align*}
where we have defined the errors $e^{i,1}(r)$ and $e^{i,2}(r)$ as
\begin{align*}
    e^{i,1}(r) & \coloneqq \int\psi_r \left[ d \, F(x,u,\nabla u) +   x\cdot \nabla_x F\left( x, u, \nabla u \right)-\nabla u\cdot \nabla_p F\left( x, u, \nabla u \right)\right] + \frac{d-2}{2} D_l(r), \\
    e^{i,2}(r) & \coloneqq \frac{1}{r} \int|x|\varphi'\left(\frac{\vert x \vert}{r}\right) \left[  F(x,u,\nabla u)-\left(\frac{x}{|x|}\cdot\nabla u\right) \left(\frac{x}{|x|}\cdot\nabla_pF(x,u,\nabla u)\right)\right].
\end{align*}
Using the bounds \eqref{e:stima-grad-F-in-X}, \eqref{e:stima-grad-F-in-p} and \eqref{e:stima-derivata-F-in-s}, we can deduce the following estimates:
\begin{align*}
\Big|d \, F(x,u,\nabla u)& +   x\cdot \nabla_x F\left( x, u, \nabla u \right)-\nabla u\cdot \nabla_p F\left( x, u, \nabla u \right)+\frac{d-2}{2}u\partial_sF(x,u,\nabla u)\Big|\\
&\le dC(|u|^{2}+|\nabla u|^{2+\gamma})+|x|C(|u|^{2}+|\nabla u|^{2+\gamma})\\
&\qquad +|\nabla u|C(|u|^{1+\gamma}+|\nabla u|^{1+\gamma})+C(|\nabla u|^{2+\gamma}+|u|^2)\\
&\le C\left(|u|^2+|\nabla u|^{2+\gamma/2}\right),
\end{align*}
\begin{align*}
\Big|F(x,u,\nabla u)&-\left(\frac{x}{|x|}\cdot\nabla u\right) \left(\frac{x}{|x|}\cdot\nabla_pF(x,u,\nabla u)\right)\Big|\\
&\le \left|F(x,u,\nabla u)\right|+\left|\nabla u\right| \left|\nabla_pF(x,u,\nabla u)\right|\\
&\le C\left(|u|^{2}+|\nabla u|^{2+\gamma}\right)+|\nabla u|C\left(|u|^{1+\gamma}+|\nabla u|^{1+\gamma}\right)\\
&\le C\left(|u|^2+|\nabla u|^{2+\gamma/2}\right).
\end{align*}
Finally, we compute the derivative of $D_l$ as
$$D_l'(r) = - \frac{1}{2r} \int|x|\varphi'\left(\frac{\vert x \vert}{r}\right) s \, \partial_s F(x, u, \nabla u),$$
and we notice that 
$$|D_l'(r)| \le  C\int|\varphi'|\left(\frac{\vert x \vert}{r}\right) |u|^2.$$
Combining the above estimates, we get that there exist error terms $E^{i, k}(r)$ with $k=1, ..., 4$ satisfying
\begin{align*}
|E^{i,1}(r)|&\le C\int \varphi\left( \frac{\vert x \vert}{r}\right) |u|^2, \\
|E^{i,2}(r)|&\le C\int \varphi\left( \frac{\vert x \vert}{r}\right)|\nabla u|^{2+\gamma/2}, \\
|E^{i,3}(r)|&\le - C\int \varphi'\left( \frac{\vert x \vert}{r}\right) |u|^2, \\
|E^{i,4}(r)|&\le - C\int \varphi'\left( \frac{\vert x \vert}{r}\right)|\nabla u|^{2+\gamma/2}.
\end{align*}
such that
\[
\frac{d-2}{2} D(r) -\frac{r}{2}D'(r) +\frac1rA(r) + \sum_{k=1}^4 E^{i,k}(r) = 0.
\]
This concludes the proof of \eqref{eqn:QN-DDerivative} and \eqref{eq:QNE{i,1}}, \eqref{eq:QNE{i,2}}, \eqref{eq:QNE{i,3}} and \eqref{eq:QNE{i,4}}.\qed

\section{Scale-preserving $L^{\infty}$-$L^2$ estimates}

The main content of this section is to prove $L^{\infty}-L^2$ estimates for minimizers $u$ of \eqref{eqn:quasilinearFunctionalF-general} at the same scale (see \cref{lemma:QNenergyEstCubesHD} below). To this aim we use the strategy of \cite{FerreriSpolaorVelichkov2024:BoundaryBranchingOnePhaseBernoulli}, thus we proceed via a Whitney decomposition argument in the spirit of Almgren-De Lellis-Spadaro \cite{Almgren2000:AlmgrensBigRegularityPaper, DeLellisSpadaro2016:AreaMinimizingCurrents1LpEstimates, DeLellisSpadaro2016:AreaMinimizingCurrents2CenterManifold, DeLellisSpadaro2016:AreaMinimizingCurrents3BlowUp} (see \cref{section:QNWhitneyDecomp} below).

In the following of this section, it will be useful to work under the following
\begin{assumptions}\label{assumptions:QNvanishingAssumptions}
    The origin is a singular point in the sense that
    \[
    u(0) = 0 \quad \text{and} \quad \nabla u(0) = 0.
    \]
\end{assumptions}

\subsection{Some weighted inequalities}
In this section we mainly recall some weighted inequalities that were derived in \cite{FerreriSpolaorVelichkov2024:BoundaryBranchingOnePhaseBernoulli}, which will be useful for the subsequent analysis.

\subsubsection*{Height inequality}

To begin with, we recall a bound from \cite{FerreriSpolaorVelichkov2024:BoundaryBranchingOnePhaseBernoulli} for the weighted $L^2$ norm $G(r)$ from \eqref{eqn:quasilinearFrequencyL2Def} in terms of the height function $H(r)$ from \eqref{eqn:quasilinearFrequencyHeightDef}.

\begin{lemma}[{\cite[Lemma 5.2]{FerreriSpolaorVelichkov2024:BoundaryBranchingOnePhaseBernoulli}}]\label{lemma:QNheight-function-integral-inequality}
For every function $u\in H^1(B_r)$
\begin{gather}
    G(r)= \int_{B_r} \varphi\left(\frac{|x|}{r}\right)\,u^2\le \int_0^r\,H(\rho)\,d\rho,
\end{gather}
where $H$ is the height function from \eqref{eqn:quasilinearFrequencyHeightDef}.
\end{lemma}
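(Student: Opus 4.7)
My plan is to reduce the inequality to a pointwise estimate on the integrand after a single application of Fubini, exploiting the fact that $\varphi'$ is supported where $|x|\le \rho$.

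The starting observation is the following identity, obtained from the fundamental theorem of calculus in the variable $\rho$: since $\varphi(|x|/\rho)\to 0$ as $\rho\to 0^+$ (for $|x|>0$) and $\tfrac{d}{d\rho}\varphi(|x|/\rho)=-\varphi'(|x|/\rho)\,|x|/\rho^{2}$, one has
\[
\varphi\!\left(\frac{|x|}{r}\right)=-\int_0^r \varphi'\!\left(\frac{|x|}{\rho}\right)\frac{|x|}{\rho^{2}}\,d\rho\qquad\text{for a.e. }x\in\R^d.
\]
This is the key representation of the outer cutoff as an average over inner cutoffs.

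Next, I would multiply by $u(x)^{2}$, integrate over $\R^d$, and apply Fubini (the integrand has a definite sign because $-\varphi'\ge 0$), obtaining
\[
G(r)=\int_{B_r}\varphi\!\left(\frac{|x|}{r}\right)u^{2}\,dx
=\int_0^r\!\int_{\R^d}\bigl(-\varphi'(|x|/\rho)\bigr)\,\frac{|x|}{\rho^{2}}\,u(x)^{2}\,dx\,d\rho.
\]
At this point the only discrepancy with the definition of $H(\rho)$ in \eqref{eqn:quasilinearFrequencyHeightDef} is the factor $|x|/\rho^{2}$ instead of $1/|x|$.

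The crucial observation is that by \eqref{eqn:QNCutoffDef}, $\varphi'(|x|/\rho)$ is supported where $(1-\upsilon)\rho\le|x|\le\rho$; in particular $|x|\le\rho$ on the support, which gives the pointwise bound
\[
\frac{|x|}{\rho^{2}}\;\le\;\frac{1}{|x|}\qquad \text{on } \{\varphi'(|x|/\rho)\ne 0\}.
\]
Combining with $-\varphi'(|x|/\rho)\ge 0$ and plugging into the previous display yields
\[
G(r)\le \int_0^r\!\int_{\R^d}\bigl(-\varphi'(|x|/\rho)\bigr)\,\frac{u(x)^{2}}{|x|}\,dx\,d\rho=\int_0^r H(\rho)\,d\rho,
\]
which is the claim. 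There is no real obstacle here: everything reduces to the representation identity together with the elementary inequality $|x|\le\rho$ on $\operatorname{supp}\varphi'(|x|/\rho)$. The only care needed is that $u\in H^{1}(B_r)$ is sufficient for Fubini to apply (all integrands are nonnegative, so Tonelli suffices and no integrability issue arises).
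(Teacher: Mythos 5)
Your proof is correct. The paper does not reproduce the argument (it simply cites Lemma 5.2 of the companion reference), but your argument is a clean and self-contained derivation. The key points all hold up: the representation
\[
\varphi\!\left(\frac{|x|}{r}\right)=-\int_0^r \varphi'\!\left(\frac{|x|}{\rho}\right)\frac{|x|}{\rho^{2}}\,d\rho
\]
is just the fundamental theorem of calculus applied to the Lipschitz map $\rho\mapsto\varphi(|x|/\rho)$ (using $\varphi(|x|/\rho)\to 0$ as $\rho\to 0^+$ for $|x|>0$); Tonelli is justified by the sign of $-\varphi'\ge 0$; and on $\operatorname{supp}\varphi'(|x|/\rho)\subset\{(1-\upsilon)\rho\le|x|\le\rho\}$ the inequality $|x|/\rho^2\le 1/|x|$ is exactly $|x|\le\rho$. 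An equivalent way to organize the same computation is to pass through the co-area formula, show $G'(\rho)\le H(\rho)$ pointwise in $\rho$, and integrate using $G(0)=0$; both routes hinge on the identical observation that $|x|\le\rho$ on the support of $\varphi'(|x|/\rho)$. Either way, nothing is missing.
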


\subsubsection*{Poincar\'e inequality}

Now recall a weighted Poincar\'e-type inequality from \cite{FerreriSpolaorVelichkov2024:BoundaryBranchingOnePhaseBernoulli}.
\begin{lemma}[{\cite[Lemma 5.4]{FerreriSpolaorVelichkov2024:BoundaryBranchingOnePhaseBernoulli}}]\label{lemma:QNweightedPoincare}
    There exist constants $c=c(d, \delta_0)>0$ and $r_0=r_0(d, \delta_0)>0$ with the following property. Suppose that the function $u$ is a local minimizer of \eqref{eqn:quasilinearFunctionalF-general} under \cref{assumptions:QNuC1aAprioriEst} and \cref{assumptions:QNvanishingAssumptions}, then
    \begin{equation}\label{eqn:weightedPoincare}
    r\,D_0(r)\geq c\, H(r) \text{ for every } r \in (0, r_0).
    \end{equation}
\end{lemma}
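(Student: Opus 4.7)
The claim is a weighted Poincaré-type inequality forcing the boundary-like height $H(r)$ to be controlled by the interior Dirichlet energy $rD_0(r)$; the structural input is \cref{assumptions:QNvanishingAssumptions}, which ensures $u$ vanishes together with $\nabla u$ at the origin. My plan is to argue by contradiction via a blow-up/linearization, reducing the statement to the corresponding fact for harmonic functions vanishing to order at least two at the origin.

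Suppose the inequality fails. Then one can extract a sequence $u_k$ of local minimizers of \eqref{eqn:quasilinearFunctionalF-general} satisfying \cref{assumptions:QNuC1aAprioriEst} and \cref{assumptions:QNvanishingAssumptions}, and radii $r_k\downarrow 0$, with $r_k D_0^{u_k}(r_k) \le \eps_k H^{u_k}(r_k)$ and $\eps_k\to 0$. Set $h_k := \bigl(H^{u_k}(r_k)/r_k^{d-1}\bigr)^{1/2}$ and rescale $v_k(y) := h_k^{-1} u_k(r_k y)$. A direct change-of-variables computation in the definitions of $H$ and $D_0$ yields the normalization $H^{v_k}(1) = 1$ together with $D_0^{v_k}(1) = r_k D_0^{u_k}(r_k)/H^{u_k}(r_k) \le \eps_k \to 0$. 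Moreover, by \cref{assumptions:QNvanishingAssumptions}, $v_k(0) = 0$ for every $k$.

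The linear $C^{0,\alpha}$ and $C^{1,\alpha}$ estimates in \cref{assumptions:QNuC1aAprioriEst}, combined with \cref{lemma:QNheight-function-integral-inequality} used to bound the $L^2$ norm of $u_k$ on $B_{r_k}$ by $\int_0^{r_k} H^{u_k}(\rho)\, d\rho$, translate after rescaling into uniform $C^{1,\alpha}$ bounds for $\{v_k\}$ on $B_{3/4}$. By Arzelà--Ascoli, up to a subsequence $v_k \to v_\infty$ in $C^{1,\beta}_{\mathrm{loc}}(B_1)$ for any $\beta<\alpha$. The uniform convergence of $\nabla v_k$ and $D_0^{v_k}(1) \to 0$ force $\int \varphi(|y|)|\nabla v_\infty|^2 \, dy = 0$, so $\nabla v_\infty \equiv 0$ on the support of $\varphi(|\cdot|) = B_1$; thus $v_\infty$ is constant on $B_1$, and since $v_\infty(0) = 0$, one gets $v_\infty \equiv 0$ on $B_1$. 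On the other hand, by the continuity of $H^{v_k}(1)$ along the convergence, $H^{v_\infty}(1) = \lim_k H^{v_k}(1) = 1$, a contradiction.

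The main technical obstacle is establishing the uniform $C^{1,\alpha}$ bound on the blow-ups $\{v_k\}$: the rescaled version of the linear estimate requires controlling $\|v_k\|_{L^2(B_1)}$ by a constant, i.e., demanding that $h_k$ match the natural $L^2$-scale of $u_k$ at radius $r_k$. This is where \cref{lemma:QNheight-function-integral-inequality} plays its role: it bounds $\int_{B_{r_k}} u_k^2$ by $r_k \sup_{\rho \le r_k} H^{u_k}(\rho)$, and one can select the contradicting radii $r_k$ to realize $H^{u_k}(r_k) = \sup_{\rho \le r_k} H^{u_k}(\rho)$ (for instance, as the radii that minimize $H(\rho)/\rho^N$ for $N$ large), thereby securing $\|v_k\|_{L^2(B_1)} \le C$ and closing the compactness loop.
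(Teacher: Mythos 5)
Your blow-up strategy is the natural one for a Poincar\'e inequality of this type, and I believe it is in the spirit of the cited reference. However, as written the argument has two genuine gaps, one of which you partially anticipate and one of which you do not.

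\textbf{(a) The radius-selection step.} You are right that the rescaling $h_k:=\bigl(H^{u_k}(r_k)/r_k^{d-1}\bigr)^{1/2}$ only produces a compactness argument if $h_k$ is comparable to the $L^2$-scale of $u_k$ at radius $r_k$, and that \cref{lemma:QNheight-function-integral-inequality} gives $\int_{B_1}\varphi\,v_k^2\le\int_0^1 H^{v_k}(\rho)\,d\rho$, which is controlled precisely when $H^{v_k}(\rho)\le 1$ for all $\rho\le 1$, i.e.\ when $H^{u_k}(r_k)=\sup_{\rho\le r_k}H^{u_k}(\rho)$. But the selection rule you suggest (``minimize $H(\rho)/\rho^N$'') does not obviously produce a radius that is still a counterexample to \eqref{eqn:weightedPoincare}. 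The clean observation is missing: $D_0(r)=\int\varphi(|x|/r)|\nabla u|^2$ is non-decreasing in $r$ (since $\varphi$ is non-increasing), so if $\tilde r_k\le r_k$ realizes $\sup_{(0,r_k]}H^{u_k}$, then
\begin{equation*}
\tilde r_k\,D_0^{u_k}(\tilde r_k)\le r_k\,D_0^{u_k}(r_k)<\eps_k\,H^{u_k}(r_k)\le\eps_k\,H^{u_k}(\tilde r_k)\,,
\end{equation*}
so $\tilde r_k$ is again a counterexample and the normalization works. Without this monotonicity input the selection argument does not close.

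\textbf{(b) Passing $H^{v_k}(1)\to H^{v_\infty}(1)$ is a real gap.} You conclude by asserting ``$H^{v_\infty}(1)=\lim_k H^{v_k}(1)=1$'' from $C^{1,\beta}_{\loc}(B_1)$ convergence. But $H(1)=-\int\varphi'(|y|)\,v_k^2/|y|\,dy$ integrates $v_k^2$ over the full annulus $\{1-\upsilon<|y|<1\}$, whose closure touches $\partial B_1$, whereas $C^{1,\beta}_{\loc}$ convergence (and the uniform linear bounds, which only hold on balls $B_\sigma$, $\sigma<1$, with constants degenerating as $\sigma\to 1$) only gives uniform convergence on compact subsets of $B_1$. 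The weighted Dirichlet energy $D_0^{v_k}(1)$ controls $\int\varphi\,|\nabla v_k|^2$, which vanishes near $|y|=1$, so one has no equi-integrability of $v_k^2$ near $\partial B_1$ and cannot rule out concentration there. A way to repair this without chasing boundary estimates is to avoid passing to the limit in $H(1)$ altogether: combine the height derivative identity \eqref{eqn:QN-HDerivative} with Cauchy--Schwarz $|B(\rho)|\le\sqrt{H(\rho)A(\rho)}\le\rho\sqrt{H(\rho)D_0'(\rho)}$ to get, after setting $P:=\sqrt{H}$, the differential inequality $\frac{d}{d\rho}\bigl(\rho^{(1-d)/2}P\bigr)\le\rho^{(1-d)/2}\sqrt{D_0'(\rho)}$, whose integration from a fixed $\sigma<1-\upsilon$ to $1$ yields
\begin{equation*}
\sqrt{H^{v_k}(1)}\le\sigma^{(1-d)/2}\sqrt{H^{v_k}(\sigma)}+C_\sigma\sqrt{D_0^{v_k}(1)}\,.
\end{equation*}
With $H^{v_k}(1)=1$, $D_0^{v_k}(1)\le\eps_k\to 0$, and $H^{v_k}(\sigma)\to 0$ from the local uniform convergence $v_k\to v_\infty\equiv 0$ on $\bar B_\sigma$ (which the linear bounds and the $L^2$-normalization of point (a) do give), the right-hand side tends to $0$, a contradiction. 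Some such comparison between $H$ at two radii is required; the direct limit you take does not follow from the compactness you establish.

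One smaller remark: your limit does not need harmonicity of $v_\infty$ at all, only $\int\varphi\,|\nabla v_\infty|^2=0$ plus $v_\infty(0)=0$, so you can omit the ``reduction to harmonic functions'' framing; the argument as you run it is already purely variational.
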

As a consequence of the $L^2$-norm bound \cref{lemma:QNheight-function-integral-inequality}, the Poincar\'e inequality from \cref{lemma:QNweightedPoincare} and the bound \eqref{e:stima-derivata-F-in-s}, we have the following
\begin{corollary}\label{corollary:QNRHSEnergyControl}
There exist constants $c=c(d, \delta_0)>0$ and $r_0=r_0(d, \delta_0)>0$ with the following property. Suppose that the function $u$ is a local minimizer of \eqref{eqn:quasilinearFunctionalF-general} under \cref{assumptions:QNuC1aAprioriEst} and \cref{assumptions:QNvanishingAssumptions}, then
\begin{itemize}
    \item[(i)]
    \[
       G(r) \le c\, r^2 \, D_0(r)\quad \text{ for every }\quad r \in (0, r_0),
    \]

    \item[(ii)]
    \[
       0 < (1-cr^{\alpha\gamma}) D_0(r)\leq D(r)\leq (1+cr^{\alpha\gamma}) D_0(r)\, \text{ for every } r \in (0, r_0).
    \]
\end{itemize}
\end{corollary}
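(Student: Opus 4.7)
The plan is to prove (i) by combining the two weighted inequalities recalled above, and then to bootstrap (i) together with the pointwise bound \eqref{e:stima-derivata-F-in-s} to deduce (ii). No deep argument is needed; the only nontrivial observation is that the singular-point assumption $\nabla u(0)=0$ in \cref{assumptions:QNvanishingAssumptions}, combined with the $C^{1,\alpha}$ control in \cref{assumptions:QNuC1aAprioriEst}, converts the apparently bad exponent $2+\gamma$ into a genuinely small perturbation at small scales.

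For (i), the starting point is the monotonicity of $r\mapsto D_0(r)$: for fixed $x$ the quantity $\varphi(|x|/r)$ is non-decreasing in $r$, since $\varphi$ is itself non-increasing and $|x|/r$ decreases with $r$. Combining \cref{lemma:QNheight-function-integral-inequality} with \cref{lemma:QNweightedPoincare} one gets, for every $\rho\le r$,
\[
H(\rho)\le C\rho\,D_0(\rho)\le C\rho\,D_0(r),
\]
and integrating in $\rho$ from $0$ to $r$ yields $G(r)\le \int_0^r H(\rho)\,d\rho \le \tfrac{C}{2}\,r^2 D_0(r)$, which is (i).

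For (ii), I would rewrite $D(r)-D_0(r)=D_l(r)=\int \varphi(|x|/r)\,u\,\partial_s F(x,u,\nabla u)$ and apply \eqref{e:stima-derivata-F-in-s} pointwise, so that $|D_l(r)|\le C\int \varphi(|x|/r)(|u|^2+|\nabla u|^{2+\gamma})$. The $|u|^2$-contribution is exactly $G(r)$, already bounded by $Cr^2 D_0(r)$ via (i). For the $|\nabla u|^{2+\gamma}$-contribution the key input is the inequality $|\nabla u(x)|\le \delta|x|^\alpha$, which follows from the $C^{1,\alpha}$ bound of \cref{assumptions:QNuC1aAprioriEst} together with $\nabla u(0)=0$: on $\supp\varphi(|x|/r)\subset B_r$ this gives $|\nabla u|^\gamma\le \delta^\gamma r^{\alpha\gamma}$, hence $\int \varphi(|x|/r)|\nabla u|^{2+\gamma}\le \delta^\gamma r^{\alpha\gamma}D_0(r)$. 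Since $r^2\le r^{\alpha\gamma}$ for $r$ small, the two estimates combine to $|D_l(r)|\le Cr^{\alpha\gamma}D_0(r)$, which is exactly the two-sided bound in (ii); the strict positivity is automatic since $D_0(r)=0$ on any interval would force $u$ to be constant and hence, by the linear estimate of \cref{assumptions:QNuC1aAprioriEst}, identically zero, in which case the statement is vacuous. The only real subtlety—and the step most likely to be miscounted—is recognising that the gain $r^{\alpha\gamma}$ on the nonlinear part of the energy comes entirely from the vanishing of $\nabla u$ at the singular point, so (ii) really is a small-scale statement and cannot be read off from the size of $F$ alone.
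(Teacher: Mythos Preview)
Your proof is correct and follows essentially the same route as the paper: for (i) you make explicit the argument behind the cited \cite[Corollary 5.5]{FerreriSpolaorVelichkov2024:BoundaryBranchingOnePhaseBernoulli} (Poincar\'e plus the monotonicity of $D_0$ plus \cref{lemma:QNheight-function-integral-inequality}), and for (ii) your chain of inequalities is exactly the paper's, with the key step $|\nabla u|^\gamma\le \delta^\gamma r^{\alpha\gamma}$ on $B_r$ spelled out rather than left implicit.
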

\begin{proof}
Point (i) follows as in \cite[Corollary 5.5]{FerreriSpolaorVelichkov2024:BoundaryBranchingOnePhaseBernoulli}. Concerning point (ii), from \eqref{e:stima-derivata-F-in-s}, \eqref{eqn:quasilinearEnergyDlDef} and point (i) we see that
\begin{align*}
\left\vert D_l(r) \right\vert & \le C \int \varphi\left( \frac{\vert x \vert }{r} \right) \left(\vert u \vert^2 + \vert \nabla u \vert^{2+\gamma}\right) \\
& \le C \left[ G(r) + r^{\alpha\gamma} D_0(r) \right] \\
& \le C \left[ r D_0(r) + r^{\alpha\gamma} D_0(r) \right],
\end{align*}
so that the conclusion follows.
\end{proof}

\subsection{Whitney decomposition}\label{section:QNWhitneyDecomp}
In the proof of the monotoncity of the frequency function (\cref{thm:QNfreqmon}), the estimates of the error terms produced from the nonlinearity $F$ rely on a Whitney decomposition type argument (see \cref{sub:error-estimates}). The construction of this Whitney decomposition is exactly the same as in \cite{FerreriSpolaorVelichkov2024:BoundaryBranchingOnePhaseBernoulli}, but since it is a key step in the proof of the main theorems (\cref{theorem:quasilinearUniqueContinuation} and \cref{thm:QNfreqmon}), we explain the detailed construction in this subsection, keeping the notations from \cite{FerreriSpolaorVelichkov2024:BoundaryBranchingOnePhaseBernoulli}.\medskip

Consider a function $u:B_R\to\R$, $u\in H^1(B_R)$, defined in some sufficiently large ball $B_R\subset\R^d$ with $R$ chosen in such a way that the cube $[-4,4]^d$ is contained in $B_R$. We define the Whitney decomposition of the cube $[-1,1]^d$ as follows.\medskip 

\noindent {\bf Basic notations.} Given $a=(a_1,\dots,a_d)\in\R^d$ and $\ell>0$, we denote by $L=L_\ell(a)$ the closed cube of center $a$ and side $2\ell$ as follows 
\begin{equation}\label{e:definition-of-a-cube}
L:=[a_1-l, \, a_1+l]\times  \dots\times [a_d-l, \, a_d+l]\,.
\end{equation}
Vice versa, for a cube $L$ of the form \eqref{e:definition-of-a-cube}, we will use the notation 
$$a(L):=(a_1,\dots,a_d)\qquad\text{and}\qquad \ell(L):=\ell.$$
Moreover, we will denote by $B_L$ the ball 
$$B_L:=B_{3 l(L)}\left( a(L) \right).$$ 
\noindent {\bf Collections of cubes.} We define the collections of cubes $\mathcal C_j$, $j\ge 1$, as follows: the only element of the set $\mathcal C_1$ is the cube $[-1,1]^{d}$; the collection of cubes $\mathcal C_2$ is obtained by dividing $[-1,1]^{d}$ into $3^d$ cubes with disjoint interiors and with the same side-length; similarly, for every $j\ge 1$, the collection $\mathcal C_{j+1}$ is obtained by dividing each of the cubes from $\mathcal C_{j}$ into $3^d$ cubes with disjoint interiors and with the same side-length. In particular, if $L\in \mathcal C_j$, for some $j$, then 
$$l(L)=3^{1-j}\qquad\text{and}\qquad a(L)\in (3^{1-j} \mathbb Z)^d.$$
By construction, if $L\in \mathcal C_j$ and $H\in \mathcal C_k$ for some $k>j$, then we have only two possibilities:
\begin{enumerate}[(1)]
\item $H\subset L$;
\item $L$ and $H$ have disjoint interiors. 
\end{enumerate}
If case (1) occurs, then we say that $H$ is a {\it descendant} of $L$ and that $L$ is an {\it ancestor} of $H$.
Moreover, if two cubes $H$ and $L$ are such that $L\in \mathcal C_j$, $H\in \mathcal C_{j+1}$ and $H\subset L$, we will say that {\it $L$ is the father of $H$} and that {\it $H$ is a son of $L$}. \medskip

\noindent {\bf Whitney decomposition.} 
From now on we fix two constants 
\begin{equation}\label{e:QN-constants-whitney}
C_0 > 0\qquad\text{and}\qquad \alpha \in (0, 1/2).
\end{equation}
We define the family of cubes (with disjoint interiors) $\mathcal W$ as: 
$$\mathcal W= \mathcal W^e\cup \mathcal W^h,$$ 
where of the family of {\it excess cubes} $\mathcal W^e$ and the family of {\it height cubes} $\mathcal W^h$, are the unions
$$\mathcal W^{e}=\bigcup_j\mathcal W^{e}_j\qquad\text{and}\qquad\mathcal W^{h}=\bigcup_j\mathcal W^{h}_j.$$
We construct the families of cubes $\mathcal W^{h}_j$ and $\mathcal W^{e}_j$ inductively. When $j=0$, we set $\mathcal W_0=\emptyset$. For $j\ge 1$, the families $\mathcal W^{h}_j$ and $\mathcal W^{e}_j$ are disjoint subsets of the collection $\mathcal C_j$ and are obtained as follows. Consider a cube $L\in \mathcal C_j$ such that 
\begin{center}
no ancestor of $L$ is in $\displaystyle\bigcup_{i=1}^{j-1}\mathcal W^{e}_i$ or in $\displaystyle\bigcup_{i=1}^{j-1}\mathcal W^{h}_i$.
\end{center}
\begin{enumerate}[(1)]
    \item We say that $L\in \mathcal W^e_j$ if
\begin{equation}\label{eqn:QNexcessDecayPropertyDefGrid}
\int_{B_L} \vert \nabla u \vert^2 \geq C_0 \,l(L)^{d+2\alpha}\,, 
\end{equation}  
where $C_0$ and $\alpha$ are the constants from \eqref{e:QN-constants-whitney}.
\item We say that $L\in \mathcal W^h_j$, if $L\notin \mathcal W^e_j$ and 
\begin{equation}\label{eqn:QNheightDecayPropertyDefGrid}
\int_{B_L}  u^2 \geq C_0 \, l(L)^{d+2+2\alpha} \,,
\end{equation}
where again $C_0$ and $\alpha$ are the constants from \eqref{e:QN-constants-whitney}.
\item If none of the above occurs we say that $L\in \mathcal S_j$.
\end{enumerate}
It is immediate to check that the decomposition $\mathcal W$ has the following properties:
\begin{itemize}
\item for every $j$, $\mathcal W^h_j$, $\mathcal W^e_j$ and $\mathcal S_j$ are disjoint subsets of $\mathcal C_j$;
\item $\mathcal W$ is a countable union of cubes with disjoint interior; 
\item the residual set of points $\displaystyle[-1,1]^d\setminus \bigcup_{L\in \mathcal W}L$ is contained in the compact set
\begin{equation}\label{e:QN-residual-set-whitney}
\Gamma:=\bigcap_{j\ge 1}\bigcup_{L\in \mathcal S_j}L\,;
\end{equation}
\item for every $x_0\in \Gamma$ it holds
\begin{equation}\label{eq:QN-contactset}
u(x_0)=0\qquad \text{and}\qquad \nabla u(x_0)=0\,;   
\end{equation}
\item if $L\in \mathcal W^e$ and $H$ is the father of $L$, then $H\notin  \mathcal W^e$, and $H\notin \mathcal W^h$, so we have
\begin{equation}\label{eqn:excessQNL2SameScale}
\int_{B_H} u^2 \le C \, l(L)^2 \int_{B_L} \vert \nabla u \vert^2 \quad \text{and} \quad \int_{B_H} \vert \nabla u \vert^2 \le C \int_{B_L} \vert \nabla u \vert^2,
\end{equation}
where $C$ depends only on the dimension $d$  and the constants $C_0$ and $\alpha$ from \eqref{e:QN-constants-whitney};
\item finally, if $L\in \mathcal W^h$ and $H$ is the father of $L$, then $L\notin \mathcal W^e$, $H\notin  \mathcal W^e$, $H\notin \mathcal W^h$, and
\begin{equation}\label{eqn:heightQNL2SameScale}
\int_{B_H} u^2 \le C \int_{B_L} u^2 \quad \text{and} \quad \int_{B_H} \vert \nabla u \vert^2 \le \frac{C}{l(L)^2} \int_{B_L} u^2 ,
\end{equation}
where as above $C$ depends on $C_0$, $\alpha$, and $d$.
\end{itemize}
We conclude this section with the following lemma, which contains two properties of the Whitney decomposition for solutions $u$ satisfying \cref{assumptions:QNuC1aAprioriEst} and \cref{assumptions:QNvanishingAssumptions}. For the proof we refer to {\cite[Lemma 5.8]{FerreriSpolaorVelichkov2024:BoundaryBranchingOnePhaseBernoulli}. 
\begin{lemma}[{\cite[Lemma 5.8]{FerreriSpolaorVelichkov2024:BoundaryBranchingOnePhaseBernoulli}}]\label{lemma:QNenergyEstCubesHD}
There exist constants $R=R(d, \delta_0)>0$, $\lambda=\lambda(d, \delta_0)>0$ and $C = C(d, \delta_0)>0$ with the following property. Suppose that the function $u$ is a minimizer of \eqref{eqn:quasilinearFunctionalF-general} under \cref{assumptions:QNuC1aAprioriEst} and \cref{assumptions:QNvanishingAssumptions}. Then, for all cubes $L \in \mathcal{W}$ with
\[
L \cap B_r \neq \emptyset,
\]
the following estimate holds:
    \[
    \| u \|_{L^{\infty}\left( L \right)}+\| \nabla u \|_{L^{\infty}\left( L \right)} \le C D_0(r)^{\lambda} \quad \text{for all } r \in (0, R).
    \]
\end{lemma}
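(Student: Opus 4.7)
The plan is to combine the Whitney decomposition with the linear $C^{0,\alpha}$--$L^2$ estimates from \cref{assumptions:QNuC1aAprioriEst}, and then to convert the resulting bounds in terms of the side-length $\ell(L)$ into a power of $D_0(r)$ using the defining thresholds of $\mathcal W^e$ and $\mathcal W^h$ together with the weighted Poincar\'e inequality from \cref{corollary:QNRHSEnergyControl}.

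The first step is to exploit the observation that, for any $L \in \mathcal W$, its father $H$ lies in $\mathcal S$; consequently neither \eqref{eqn:QNexcessDecayPropertyDefGrid} nor \eqref{eqn:QNheightDecayPropertyDefGrid} holds on $B_H$, so that
\[
\int_{B_H} |\nabla u|^2 \le C\,\ell(L)^{d+2\alpha} \qquad\text{and}\qquad \int_{B_H} u^2 \le C\,\ell(L)^{d+2+2\alpha}.
\]
Since $L \subset B_H$ and $B_H$ has radius comparable to $\ell(L)$, applying the linear $C^{0,\alpha}$--$L^2$ estimates of \cref{assumptions:QNuC1aAprioriEst} at the scale of $B_H$ yields
\[
\|u\|_{L^\infty(L)} \le C\,\ell(L)^{1+\alpha}, \qquad \|\nabla u\|_{L^\infty(L)} \le C\,\ell(L)^{\alpha}.
\]

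The second step is to turn the factor $\ell(L)$ into a power of $D_0(r)$, distinguishing the two classes. When $L\in\mathcal W^e$, the lower bound \eqref{eqn:QNexcessDecayPropertyDefGrid} combined with the inclusion of $B_L$ in a region where the rescaled cutoff is bounded from below yields $\ell(L)^{d+2\alpha} \le C\,D_0(r)$. When $L \in \mathcal W^h$, the lower bound \eqref{eqn:QNheightDecayPropertyDefGrid} together with the weighted Poincar\'e inequality $G(r) \le Cr^2 D_0(r)$ from \cref{corollary:QNRHSEnergyControl}(i) gives $\ell(L)^{d+2+2\alpha} \le Cr^2 D_0(r) \le C\,D_0(r)$ for $r\le R \le 1$. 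Choosing $\lambda := \alpha/(d+2+2\alpha)$ and combining with the linear estimates then closes the argument.

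The main technical obstacle is the preliminary observation that, for $r < R = R(d,\delta_0)$ small enough, every $L \in \mathcal W$ meeting $B_r$ automatically satisfies $\ell(L) \lesssim r$, so that $B_L$ is contained in a fixed dilation of $B_r$ and the integrals over $B_L$ can be compared with $D_0(r)$ and $G(r)$ up to dimensional constants. This relies on the global smallness $\|u\|_{C^{1,\alpha}(B)}\le\delta$: combined with the Whitney thresholds \eqref{eqn:QNexcessDecayPropertyDefGrid}--\eqref{eqn:QNheightDecayPropertyDefGrid}, it forces a uniform upper bound on the side-length of every cube in $\mathcal W$, which in turn implies that all cubes meeting a sufficiently small ball $B_r$ are themselves small. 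Once this is established, the two cases above assemble without further difficulty into the stated estimate.
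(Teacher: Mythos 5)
Your plan has the right skeleton: extract a pointwise estimate on $L$ from the fact that the father of $L$ lies in $\mathcal S$ (via the linear $C^{0,\alpha}$--$L^2$ estimates of \cref{assumptions:QNuC1aAprioriEst}), and then convert the remaining power of $\ell(L)$ into a power of $D_0(r)$ using the Whitney thresholds. However, the crucial conversion step is not actually justified, and this is a genuine gap.

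The problem sits in your ``main technical obstacle'' paragraph. You claim that, for $r$ small, every $L\in\mathcal W$ with $L\cap B_r\neq\emptyset$ satisfies $\ell(L)\lesssim r$, and you justify this by noting that the $C^{1,\alpha}$-smallness of $u$ together with \eqref{eqn:QNexcessDecayPropertyDefGrid}--\eqref{eqn:QNheightDecayPropertyDefGrid} forces a uniform \emph{upper bound} $\ell(L)\le\ell_{\max}(\delta)$ on every $L\in\mathcal W$. But that uniform bound does not imply $\ell(L)\lesssim r$: for $r<\ell_{\max}$ a cube of side comparable to $\ell_{\max}$ can still touch $B_r$. The inference ``uniformly bounded side-lengths, hence all cubes meeting a small ball are small'' is a non-sequitur; the decomposition $\mathcal W$ is triadic and a priori nothing ties $\ell(L)$ to $\operatorname{dist}(L,0)$ for this non-standard (decay-based rather than distance-based) Whitney construction. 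Some additional input about how $\ell(L)$ relates to the location of $L$ relative to the residual set $\Gamma$ (which contains the origin by \cref{assumptions:QNvanishingAssumptions}) is needed, and it is not supplied.

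Even if one grants $\ell(L)\lesssim r$, the second step of the conversion is also not established. You invoke ``the inclusion of $B_L$ in a region where the rescaled cutoff is bounded from below,'' i.e.\ roughly $B_L\subset B_{(1-\upsilon) r}$, to pass from \eqref{eqn:QNexcessDecayPropertyDefGrid} (an integral over $B_L$) to $D_0(r)=\int\varphi(|x|/r)|\nabla u|^2$. But the hypothesis is only $L\cap B_r\neq\emptyset$, so $L$ can sit near $\partial B_r$, where $\varphi(\cdot/r)$ vanishes and $B_L$ spills outside $B_r$; then $\int_{B_L}|\nabla u|^2$ is only controlled by $D_0(Cr)$ for some $C>1$, not by $D_0(r)$. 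Without a comparison between $D_0(Cr)$ and $D_0(r)$ (which is exactly the doubling information this whole machinery is trying to \emph{produce}), the chain $\ell(L)^{d+2\alpha}\le C\,D_0(r)$ does not close. The same issue affects the $\mathcal W^h$ case and the passage through \cref{corollary:QNRHSEnergyControl}(i). So both halves of Step~3 in your plan rest on comparisons that are not available at this stage of the argument.

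As a minor point, the exponents you write after applying the linear estimates ($\ell(L)^{1+\alpha}$ and $\ell(L)^\alpha$) do not match what those estimates actually yield via the father's membership in $\mathcal S$ (one gets $\ell(L)^{2+2\alpha}$ and $\ell(L)^{1+2\alpha}$, after dividing out the volume factor in $\fint_{B_H}u^2$). This does not break the argument---only a positive power of $\ell(L)$ is needed---but it suggests the bookkeeping was not carried through carefully.
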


\section{Frequency (almost-)monotonicity}
Let us introduce the Almgren-type frequency function $N(r)$ defined as
\begin{equation}\label{eqn:quasilinearFrequencyN(r)Def}
    N(r) \coloneqq \frac{r D(r)}{H(r)}. 
\end{equation}
The main content of this section is the following
\begin{theorem}[]\label{thm:QNfreqmon}
   There exist constants $R=R(d, \delta_0)>0$, $\lambda=\lambda(d, \delta_0)>0$ and $C = C(d, \delta_0)>0$ with the following property. Suppose that the function $u$ in $B_R$ is a local minimizer of \eqref{eqn:quasilinearFunctionalF-general} under \cref{assumptions:QNuC1aAprioriEst} and \cref{assumptions:QNvanishingAssumptions}, and that
   \begin{equation*}
   H(r_0)>0 \quad \text{for some } r_0 \in (0, R).
   \end{equation*}
   Then,
   \begin{equation}\label{eqn:QNFrequencyAlmostMonocotone}
        e^{\,g(r)} N(r) \text{ is non-decreasing in a neighborhood of } r_0,
    \end{equation}
    where the function $g(r): \R^+ \to \R$ is defined as
    \begin{equation}\label{eqn:QNFrequencyg(r)Def}
    g(r) \coloneqq \frac{C}{\beta} \left[ r^{\beta} + D(r)^{\beta} \right]
    \end{equation}
    and satisfies
    \begin{equation}\label{eqn:QNFrequencyg(r)Limit0}
    g(r) \to 0 \text{ as } r \to 0^+.
    \end{equation}
\end{theorem}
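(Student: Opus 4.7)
I would follow the Almgren-type almost-monotonicity strategy used in \cite{FerreriSpolaorVelichkov2024:BoundaryBranchingOnePhaseBernoulli}: compute the logarithmic derivative of $N$, substitute the three variational identities of \cref{lem:quasilinearFreqIdentities}, apply Cauchy--Schwarz to annihilate the leading positive terms, and finally absorb the residual errors by means of the Whitney $L^\infty$ estimates.

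\emph{Step 1 (Algebraic reduction).} Starting from
\[
\frac{N'(r)}{N(r)} = \frac{1}{r} + \frac{D'(r)}{D(r)} - \frac{H'(r)}{H(r)},
\]
I substitute \eqref{eqn:QN-HDerivative} to write $H'/H = (d-1)/r + 2B/(rH)$, \eqref{eqn:QN-DDerivative} to write $D'/D = (d-2)/r + 2A/(r^2 D) + e_I/(rD)$, and \eqref{eqn:QN-OuterVariationD} to write $B/r = D + e_O$. After the dimensional constants cancel, I obtain
\[
\frac{N'(r)}{N(r)} \;=\; \frac{2A(r)}{r^2 D(r)} \;-\; \frac{2D(r)}{H(r)} \;-\; \frac{2\,e_O(r)}{H(r)} \;+\; \frac{e_I(r)}{rD(r)}.
\]

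\emph{Step 2 (Cauchy--Schwarz).} Since $-\varphi'\ge 0$, Cauchy--Schwarz applied to the pair $u/\sqrt{|x|}$ and $\sqrt{|x|}\,\bigl(\nabla u\cdot x/|x|\bigr)$ against the measure $-\varphi'(|x|/r)\,dx$ yields $B(r)^2 \le H(r)\,A(r)$. Combined with $B = r(D+e_O)$ this gives $A(r)/(r^2 D(r)) \ge (D(r)+e_O(r))^2/(D(r)\,H(r))$; expanding the square and cancelling the $2D/H$ term,
\[
\frac{N'(r)}{N(r)} \;\ge\; \frac{2\,e_O(r)}{H(r)} \;+\; \frac{2\,e_O(r)^2}{D(r)\,H(r)} \;+\; \frac{e_I(r)}{rD(r)} \;\ge\; -\,\frac{2\,|e_O(r)|}{H(r)} \;-\; \frac{|e_I(r)|}{rD(r)}.
\]

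\emph{Step 3 (Error bounds --- the main obstacle).} This is where the quasilinear nature of the problem enters and is the principal technical hurdle. For each of the eight error integrands bounded in \cref{lem:quasilinearFreqIdentities}, I would combine three ingredients. First, the Whitney $L^\infty$ estimate $\|u\|_\infty + \|\nabla u\|_\infty \le C\,D_0(r)^\lambda$ from \cref{lemma:QNenergyEstCubesHD} is used to peel off a factor $D_0(r)^{\lambda\kappa}$ from the $(2+\kappa)$-power integrands. Second, the weighted Poincar\'e inequality (\cref{lemma:QNweightedPoincare}) together with \cref{corollary:QNRHSEnergyControl} furnishes $G(r)\le c\,r^2 D_0(r)$ and $D(r)\sim D_0(r)$. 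Third, on $\mathrm{supp}\,\varphi'\subset\{(1-\upsilon)r\le|x|\le r\}$ the elementary identities $-\!\int\varphi'(|x|/r)\,u^2 \le r\,H(r)$ and $-\!\int\varphi'(|x|/r)\,|\nabla u|^2 \le \tfrac{r}{1-\upsilon}\,D_0'(r)$ hold, while Cauchy--Schwarz on the mixed boundary integrand $-\!\int\varphi'|u||\nabla u|$ controls it by $\sqrt{H(r)\,r\,D_0'(r)}$. Putting all of this together, I expect the bound
\[
\frac{|e_O(r)|}{H(r)} \;+\; \frac{|e_I(r)|}{rD(r)} \;\le\; C\Bigl(r^{\beta-1} + D(r)^{\beta-1}\,D'(r)\Bigr)
\]
for a small exponent $\beta=\beta(\gamma,\alpha,\lambda)>0$ determined by the growth exponent $\gamma$ from \eqref{e:stima-grad-F-in-p}, the H\"older exponent $\alpha$, and the Whitney exponent $\lambda$. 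Keeping track of the eight simultaneous terms, and producing the correct interpolation between powers of $u$, $\nabla u$ and $r$ so that both the interior ($\varphi$) and boundary ($\varphi'$) pieces are dominated by a single $g'$, is the most delicate bookkeeping in the argument.

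\emph{Step 4 (Conclusion).} Define $g(r) \coloneqq \tfrac{C}{\beta}\bigl(r^\beta + D(r)^\beta\bigr)$, so that $g'(r) = C\bigl(r^{\beta-1} + D(r)^{\beta-1} D'(r)\bigr)$. Combining Steps 2 and 3 gives $(\log N + g)'(r) \ge 0$, which is \eqref{eqn:QNFrequencyAlmostMonocotone}. The hypothesis $H(r_0)>0$ and the continuity of $H$ (a consequence of the $C^{1,\alpha}$ regularity) guarantee that $H$, and hence $D$ and $N$, stay strictly positive in a neighbourhood of $r_0$, so that all the quotients above are well defined. Finally, \eqref{eqn:QNFrequencyg(r)Limit0} is a consequence of $D(r)\to 0$ as $r\to 0^+$, which in turn follows from $\nabla u(0)=0$ together with the $C^{1,\alpha}$ bound in \cref{assumptions:QNuC1aAprioriEst}.
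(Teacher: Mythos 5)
Your overall strategy (variational identities, Cauchy--Schwarz, Whitney error estimates, exponential weight) is the one used in the paper, and Steps~1, 3 and 4 are essentially aligned with the paper's arguments. The genuine problem is in Step~2. By substituting $B = r\bigl(D+e_O\bigr)$ into the Cauchy--Schwarz inequality $B^2 \le A\,H$ and expanding the square, you are left with the residual error $2\,e_O(r)/H(r)$. This term has the height $H(r)$ alone in the denominator, and it cannot be controlled with the available tools: the weighted Poincar\'e inequality (\cref{lemma:QNweightedPoincare}) gives only the \emph{upper} bound $H(r)\le C\,r\,D_0(r)$, whereas bounding $|e_O|/H$ by $r^{\beta-1} + D_0^{\beta-1}D_0'$ would require the reverse inequality $H(r)\gtrsim r\,D_0(r)$. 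Such a lower bound on $H$ (see \eqref{eqn:QNFrequencyBoundAbove}) is a \emph{consequence} of the frequency almost-monotonicity, so invoking it here is circular. Concretely, the bulk piece $E^{o,2}$ satisfies only $|E^{o,2}|\le C\,D_0(r)^{1+\lambda\kappa}$, and $|E^{o,2}|/H$ has no usable upper bound without the missing information on $H$.

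The paper sidesteps this in \cref{lemma:QNLogFrequencyDerivative} by introducing the auxiliary quantity $F(r)=B(r)/r - E^{o,4}(r)$ and rewriting
\begin{equation*}
\frac{A}{r^2 D} - \frac{B}{rH} \;=\; \frac{AH-B^2}{r^2 F H} \;+\; \frac{1}{r}\frac{B\,E^{o,4}}{FH} \;+\; \frac{A}{r^2}\Bigl(\frac{1}{D}-\frac{1}{F}\Bigr),
\end{equation*}
so that the only surviving error with $H$ in the denominator is $e_3=\frac{1}{r}\frac{B\,E^{o,4}}{FH}$, which is saved by a hidden cancellation: both $|B(r)|$ and $|E^{o,4}(r)|$ are individually bounded by a constant multiple of $\bigl(r^2 H(r)D_0'(r)\bigr)^{1/2}$ (see \eqref{eqn:QNB(r)Est} and \eqref{eqn:QNE^{o, 4}(r)Est}), so their product carries a factor $H(r)$ that cancels the one in the denominator, leaving $|e_3|\lesssim D_0^{\lambda\kappa-1}D_0'$. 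The remaining outer errors are packaged in $e_2$, where they are weighted by $A/(r^2 D F)$ rather than $1/H$, and this is controlled by $|A|\le C r^2 D_0'$ and $F\sim D_0$. Your Step~2 discards the factor $B$ responsible for the cancellation. To repair the argument, re-organize the algebra as in \cref{lemma:QNLogFrequencyDerivative} before applying Cauchy--Schwarz; after that, the ingredients you list in Step~3 (Whitney $L^\infty$ bound, weighted Poincar\'e, boundary Cauchy--Schwarz) and the choice of $g$ in Step~4 do close the proof.
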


\subsection{Frequency derivative}
To begin with, we compute the derivative of the frequency function intoduced in \eqref{eqn:quasilinearFrequencyN(r)Def}. To this aim, we first introduce the auxiliary quantity
\begin{equation}\label{eqn:quasilinearF(r)Def}
    F(r) \coloneqq \frac{1}{r} B(r) - E^{o,4}(r),
\end{equation}
and then we prove the following
\begin{lemma}\label{lemma:QNLogFrequencyDerivative}
 There exists a constant $R=R(d, \delta_0)>0$ with the following property. Suppose that
\[
H(r_0) > 0 \quad\text{for some}\quad r_0 \in (0, R).
\]
Then, for all $r$ in a neighborhood of $r_0$, the following identity holds
\begin{equation}\label{eqn:QNLogFrequencyDerivative}
    \frac{d}{dr} \ln{N(r)} = \frac{1}{r} + \frac{D'}{D} - \frac{H'}{H} = \frac{2}{r^2} \frac{1}{F(r) H(r)} \left[A(r)H(r) - B(r)^2 \right]+ \sum_{k=1}^3 e_k(r),
\end{equation}
where we have defined the error terms in the following way:
\begin{align}
    & e_1(r) \coloneqq  \frac{1}{r} \frac{\sum_{k=1}^4 E^{i, k}(r)}{D(r)}, \label{eqn:QNerre_1Def}\\
    & e_2(r) \coloneqq - \frac{1}{r^2}\frac{A(r)}{D(r)F(r)} \sum_{k=1}^3 E^{o, k}(r), \label{eqn:QNerre_2Def}\\
    & e_3(r) \coloneqq  \frac{1}{r}\frac{B(r) E^{o,4}(r)}{F(r) H(r)}. \label{eqn:QNerre_3Def}
\end{align}
\end{lemma}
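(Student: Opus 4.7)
The plan is to start from the trivial identity $\ln N(r)=\ln r+\ln D(r)-\ln H(r)$, differentiate, substitute the three identities \eqref{eqn:QN-HDerivative}, \eqref{eqn:QN-OuterVariationD} and \eqref{eqn:QN-DDerivative} from \cref{lem:quasilinearFreqIdentities}, and then rearrange algebraically so that the main term takes the Cauchy--Schwarz form $AH-B^2$ over $r^2FH$. Before doing any of this, I first need to check that the quantities in the denominators are non-zero in a neighborhood of $r_0$: continuity of $H$ gives $H(r)>0$ near $r_0$; \cref{lemma:QNweightedPoincare} together with \cref{corollary:QNRHSEnergyControl}(ii) then give $D(r)\ge(1-cr^{\alpha\gamma})D_0(r)\ge c'H(r)/r>0$ for small $r$; finally, \eqref{eqn:QN-OuterVariationD} gives $B(r)/r=D(r)+e_O(r)$, and since $|e_O|=o(D)$ by the estimates \eqref{eq:QNE{o,1}}--\eqref{eq:QNE{o,4}} combined with \cref{corollary:QNRHSEnergyControl}, we also have $F(r)=B(r)/r-E^{o,4}(r)=D(r)+\sum_{k=1}^3 E^{o,k}(r)>0$ near $r_0$. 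So everything is well-defined and differentiable on a small interval around $r_0$.

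Next I would carry out the direct computation. From \eqref{eqn:QN-HDerivative} one gets $H'/H=(d-1)/r+2B/(rH)$; from \eqref{eqn:QN-DDerivative} one gets $D'/D=(d-2)/r+2A/(r^2D)+e_I/(rD)$. Adding $1/r$, the purely dimensional constants $(d-2)+1-(d-1)=0$ cancel, leaving
\[
\frac{d}{dr}\ln N(r)=\frac{2A(r)}{r^2D(r)}-\frac{2B(r)}{rH(r)}+\frac{e_I(r)}{rD(r)}\,.
\]
This is already a Cauchy--Schwarz-type expression, but with $D$ in the denominator rather than $F$. The main step of the proof is to trade $D$ for $F$ in the first term, and to produce the square $B^2$ in the second term, creating explicit error terms of the form indicated.

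For the second term I would use the key algebraic identity $B-rF=rE^{o,4}$ (which is just the definition \eqref{eqn:quasilinearF(r)Def} of $F$), so that
\[
\frac{2B}{rH}=\frac{2B}{rH}\cdot\frac{rF+rE^{o,4}}{rF}\cdot\frac{1}{1+E^{o,4}/F}
\]
— or more cleanly, multiply and divide $B^2/(r^2FH)$ by $B/(rF)$ and use $B/(rF)=1+E^{o,4}/F$ to obtain
\[
\frac{2B^2}{r^2FH}=\frac{2B}{rH}+\frac{2B\,E^{o,4}}{rFH}\,.
\]
For the first term I use $F-D=\sum_{k=1}^3 E^{o,k}$, which follows by subtracting \eqref{eqn:quasilinearF(r)Def} from \eqref{eqn:QN-OuterVariationD}, to write
\[
\frac{2A}{r^2D}=\frac{2A}{r^2F}+\frac{2A\sum_{k=1}^3 E^{o,k}}{r^2DF}\,.
\]
Substituting both expansions into the computation above produces exactly the main term $\frac{2(AH-B^2)}{r^2FH}$, while the leftover pieces are (up to the normalizing factors that are built into the definitions of $e_1,e_2,e_3$) the three error terms \eqref{eqn:QNerre_1Def}--\eqref{eqn:QNerre_3Def}.

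I do not expect any serious obstacle: the positivity of $F$, $D$, $H$ near $r_0$ is where \cref{corollary:QNRHSEnergyControl} is essential, but once that is in hand the rest is a bookkeeping exercise. The only step that requires a little care is matching signs and constants in the definitions of $e_1,e_2,e_3$, which amounts to tracking the factor of $2$ that appears through $e_I=2\sum_{k=1}^4 E^{i,k}$ and the sign convention $F-D=\sum_{k=1}^3 E^{o,k}$ versus $D-F=-\sum_{k=1}^3 E^{o,k}$.
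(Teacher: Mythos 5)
Your proposal is correct and follows essentially the same route as the paper: differentiate $\ln N=\ln r+\ln D-\ln H$, substitute \eqref{eqn:QN-HDerivative}, \eqref{eqn:QN-OuterVariationD}, \eqref{eqn:QN-DDerivative}, and rearrange the main term into the Cauchy--Schwarz form $\frac{2}{r^2FH}\left[AH-B^2\right]$ by trading $D$ for $F$ via $F-D=\sum_{k=1}^{3}E^{o,k}$ and completing the square using $B-rF=rE^{o,4}$. The sign/factor-of-$2$ bookkeeping you flag is indeed the only subtlety (the paper's definitions of $e_k$ carry a few such harmless discrepancies relative to the computed expressions), but since the $e_k$ are only ever estimated in absolute value in \cref{prop:QNFrequncyErrsEst}, these are absorbed into the constant $C$.
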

\begin{proof}
To begin with, we have that
\begin{equation}\label{eqn:QNlnNFrequencyDerivativeCOmputation}
\frac{d}{dr} \ln{N(r)} = \frac{1}{r} + \frac{D'(r)}{D(r)} - \frac{H'(r)}{H(r)}.
\end{equation}
Now, from \eqref{eqn:QN-DDerivative} we have
\begin{equation}\label{eqn:QND'DFrequencyDerivativeCOmputation}
\frac{D'(r)}{D(r)} = \frac{d-2}{r} +\frac{2}{r^2} \frac{A(r)}{D(r)} + e_3(r).
\end{equation}
On the other hand, \eqref{eqn:QN-HDerivative} implies that
\begin{equation}\label{eqn:QNH'HFrequencyDerivativeCOmputation}
\frac{H'(r)}{H(r)} = \frac{d-1}{r} + \frac{2}{r}\frac{B(r)}{H(r)}.
\end{equation}
Combining \eqref{eqn:QNlnNFrequencyDerivativeCOmputation}, \eqref{eqn:QND'DFrequencyDerivativeCOmputation} and \eqref{eqn:QNH'HFrequencyDerivativeCOmputation} we deduce
\begin{equation}\label{eqn:QNlnNFrequencyDerivativeComputation}
\frac{d}{dr} \ln{N(r)} = 2 \left[\frac{1}{r^2} \frac{A(r)}{D(r)} - \frac{1}{r} \frac{B(r)}{H(r)} \right] + e_3(r).
\end{equation}
Since we wish to avoid comparing most error terms with the height $H(r)$, we can split the terms in square brackets as
\begin{align*}
2 \left[\frac{1}{r^2} \frac{A(r)}{D(r)} - \frac{1}{r} \frac{B(r)}{H(r)} \right] & = 2 \left[\frac{1}{r^2} \frac{A(r)}{F(r)} - \frac{1}{r} \frac{B(r)}{H(r)} \right] + 2 \frac{A(r)}{r^2} \left[\frac{1}{D(r)} - \frac{1}{F(r)} \right] \\
& = 2 \left[\frac{1}{r^2} \frac{A(r)}{F(r)} - \frac{1}{r} \frac{B(r)}{H(r)} \right] + e_2(r),
\end{align*}
where the quantity $F(r)$ is the one from \eqref{eqn:quasilinearF(r)Def}. Hence, we can rewrite \eqref{eqn:QNlnNFrequencyDerivativeCOmputation} as
\begin{align*}
    \frac{d}{dr} \ln{N(r)} & = \frac{2}{r^2} \frac{1}{F(r) H(r)} \left[A(r)H(r) - rF(r)B(r) \right] + e_1(r) + e_2(r) \\
    & = \frac{2}{r^2} \frac{1}{F(r) H(r)} \left[A(r)H(r) - B(r)^2 \right] + \frac{2}{r^2}\frac{B(r)}{F(r) H(r)} \left( B(r) - rF(r) \right) + e_1(r) + e_2(r) \\
    & = \frac{2}{r^2} \frac{1}{F(r) H(r)} \left[A(r)H(r) - B(r)^2 \right] + e_1(r) + e_2(r) + e_3(r) ,
\end{align*}
which is exactly \eqref{eqn:QNLogFrequencyDerivative}.
\end{proof}

\subsection{Error estimates}\label{sub:error-estimates}
The (almost-)monotonicity of the frequency function will follow as a consequence of the following proposition, which deals with the estimate for the error terms.
\begin{proposition}\label{prop:QNFrequncyErrsEst}
    There exists constants $R=R(d, \delta_0)>0$, $C=C(d, \delta_0)$, $\beta=\beta(d, \delta_0)$ with the following properties. Suppose that the function $u$ is a local minimizer of \eqref{eqn:quasilinearFunctionalF-general} under \cref{assumptions:QNuC1aAprioriEst} and \cref{assumptions:QNvanishingAssumptions} and that
    \begin{equation*}
        H(r_0 > 0) \quad \text{for some } r_0 \in (0, R).
    \end{equation*}
    Then, for all $r$ in a neighborhood of $r_0$ the following estimates hold:
    \begin{align}
        & \vert e_1(r) \vert \le C \left[ r^{\beta-1} + D_0(r)^{\beta-1} D_0'(r) \right], \label{eqn:errQNEste1} \\
        & \vert e_2(r) \vert \le C D_0(r)^{\beta-1} D_0'(r), \label{eqn:errQNEste2} \\
        & \vert e_3(r) \vert \le C D_0(r)^{\beta-1} D_0'(r). \label{eqn:errQNEste3}
    \end{align}
\end{proposition}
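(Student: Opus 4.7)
The plan is to bound each of $e_1,e_2,e_3$ in \eqref{eqn:QNerre_1Def}--\eqref{eqn:QNerre_3Def} by reducing the nonlinear error terms $E^{o,k}(r),E^{i,k}(r)$ from \cref{lem:quasilinearFreqIdentities} to \emph{linear} expressions in $D_0(r)$ and $D_0'(r)$, exploiting the Whitney $L^\infty$ bound of \cref{lemma:QNenergyEstCubesHD} and the weighted inequalities of \cref{corollary:QNRHSEnergyControl}.

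\textbf{Error bounds.} First I apply \cref{lemma:QNenergyEstCubesHD} to all Whitney cubes intersecting $B_r$ and use $u=|\nabla u|=0$ on the residual set $\Gamma$ to obtain $\|u\|_{L^\infty(B_r)}+\|\nabla u\|_{L^\infty(B_r)}\le C D_0(r)^\lambda$. Factoring one such power out of every $(2+\kappa)$-integrand (with $\kappa=\gamma/2$), observing that $|x|\sim r$ on $\supp(\varphi'(|x|/r))$, and applying Cauchy--Schwarz for the mixed boundary term $-\int\varphi' |u||\nabla u|^{1+\kappa}$, I get, after invoking \cref{corollary:QNRHSEnergyControl}(i) (through $G\le Cr^2 D_0$ and $-\int\varphi' u^2\le rH\le Cr^2 D_0$):
\begin{align*}
|E^{i,1}|,\,|E^{i,3}|&\le C r^2 D_0,\\
|E^{o,1}|,\,|E^{o,3}|&\le C r^2 D_0^{1+\lambda\kappa},\\
|E^{i,2}|,\,|E^{o,2}|&\le C D_0^{1+\lambda\kappa},\\
|E^{i,4}|&\le C r D_0^{\lambda\kappa} D_0',\\
|E^{o,4}|&\le C r D_0^{\lambda\kappa} \sqrt{H D_0'}.
\end{align*}

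\textbf{Auxiliary bounds and assembly.} The outer variation identity \eqref{eqn:QN-OuterVariationD} rewrites as $F(r)=D(r)+\sum_{k=1}^3 E^{o,k}(r)$; combined with the previous estimates and \cref{corollary:QNRHSEnergyControl}(ii), this gives $F\sim D\sim D_0$ for $R$ small. Further, $A(r)\le r^2 D_0'(r)$ since $(\nabla u\cdot x/|x|)^2\le|\nabla u|^2$, and Cauchy--Schwarz on the measure $-\varphi'(|x|/r)dx$ yields $B(r)^2\le A(r)H(r)$, whence $|B(r)|\le r\sqrt{H(r)D_0'(r)}$. Setting $\beta\coloneqq\lambda\kappa$ and inserting these bounds into \eqref{eqn:QNerre_1Def}--\eqref{eqn:QNerre_3Def} gives
\begin{align*}
|e_1|&\le Cr+\frac{C D_0^{\lambda\kappa}}{r}+C D_0^{\beta-1}D_0',\\
|e_2|&\le C D_0^{\beta-1}D_0',\\
|e_3|&\le \frac{|B|\,|E^{o,4}|}{rFH}\le\frac{r\sqrt{HD_0'}\cdot Cr D_0^{\lambda\kappa}\sqrt{HD_0'}}{rD_0\,H}=Cr D_0^{\beta-1}D_0',
\end{align*}
where the two factors $\sqrt H$ cancel the $H$ in the denominator of $e_3$. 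The $C^{1,\alpha}$ smallness in \cref{assumptions:QNuC1aAprioriEst} yields the a priori decay $D_0(r)\le Cr^{d+2\alpha}$, which for $r\le R$ small absorbs both $r$ and $D_0^{\lambda\kappa}/r$ into $r^{\beta-1}$ and the extra factor $r$ in the $e_3$ bound into the constant; this produces \eqref{eqn:errQNEste1}--\eqref{eqn:errQNEste3}.

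\textbf{Main obstacle.} The delicate point is the bound on $e_3$, since the height $H(r)$ appears in the denominator and cannot be bounded from below. The cancellation is made possible only by a double use of Cauchy--Schwarz (one producing $\sqrt H$ in the estimate for $|B|$, the other producing $\sqrt H$ in the estimate for $|E^{o,4}|$), after which the full $H$ disappears and we are left with $\sqrt{A\, D_0'}/F$, which is controlled via $A\le r^2 D_0'$ and $F\sim D_0$. A related subtlety is the reduction $F\sim D_0$ itself, which requires the outer variation errors to be subdominant with respect to $D_0$; this rests on the smallness of $D_0(r)^{\lambda\kappa}$ for $r\le R$, hence ultimately on the Whitney $L^\infty$-$L^2$ estimate of \cref{lemma:QNenergyEstCubesHD}.
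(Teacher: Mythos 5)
Your proposal is correct and follows essentially the same route as the paper: bound each $E^{o,k},E^{i,k}$ by factoring out a power $D_0(r)^{\lambda\kappa}$ via the Whitney $L^\infty$ estimate, reduce the rest to $G$, $H$, $D_0$, $D_0'$ via \cref{lemma:QNheight-function-integral-inequality}, \cref{lemma:QNweightedPoincare} and \cref{corollary:QNRHSEnergyControl}, establish $F\sim D\sim D_0$ from the outer variation, and handle the $H$ in the denominator of $e_3$ via the double Cauchy--Schwarz producing $|B|,|E^{o,4}|\lesssim r D_0^{\lambda\kappa}\sqrt{HD_0'}$. Your exponent bookkeeping is also consistent with the intended statement (the paper writes $D_0^{1-\lambda\kappa}D_0'$ in a couple of places where $D_0^{\lambda\kappa-1}D_0'$ is clearly meant, matching the claimed $D_0^{\beta-1}D_0'$ with $\beta<1$).
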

We proceed to prove the estimates \eqref{eqn:errQNEste1}, \eqref{eqn:errQNEste2} and \eqref{eqn:errQNEste3} in order.

\subsection*{Proof of \eqref{eqn:errQNEste1}}
To begin with, combining the estimates \eqref{eq:QNE{i,1}}, \eqref{eq:QNE{i,2}}, \eqref{eq:QNE{i,3}} and \eqref{eq:QNE{i,4}}, together with the Whitney decomposition from \cref{section:QNWhitneyDecomp}, the bounds from \cref{lemma:QNenergyEstCubesHD}, \cref{lemma:QNheight-function-integral-inequality} and \cref{corollary:QNRHSEnergyControl}, we have
\begin{align}\label{eqn:QNE^{i, 1}(r)Est}
    \begin{split}
        \left\vert E^{i, 1}(r) \right\vert & \le G(r) \le C r^2 D_0(r),
    \end{split}
\end{align}
\begin{align}\label{eqn:QNE^{i, 2}(r)Est}
    \begin{split}
        \left\vert E^{i, 2}(r) \right\vert & \le C \sum_{L \in \mathcal{W}}  \int_{L} \varphi\left( \frac{\vert x \vert}{r}\right)|\nabla u|^{2+\kappa} \\
        & \le C \left\| \nabla u \right\|_{L^{\infty}\left( L \right)}^{\kappa} \int_{B_r} \varphi\left( \frac{\vert x \vert}{r}\right)|\nabla u|^2  \le C D_0(r)^{1+\lambda\kappa},
    \end{split}
\end{align}
\begin{align}\label{eqn:QNE^{i, 3}(r)Est}
    \begin{split}
        \left\vert E^{i, 3}(r) \right\vert & \le C r H(r) \le C  r^2 D_0(r),
    \end{split}
\end{align}
\begin{align}\label{eqn:QNE^{i, 4}(r)Est}
    \begin{split}
        \left\vert E^{i, 4}(r) \right\vert & \le -C \sum_{L \in \mathcal{W}} \int_{L \cap B_r} \varphi'\left( \frac{\vert x \vert}{r}\right) |\nabla u|^{2+\kappa} \\
        & \le -C \left\| \nabla u \right\|_{L^{\infty}\left( L \right)}^{\kappa} \int_{B_r} \varphi'\left( \frac{\vert x \vert}{r}\right)|\nabla u|^2 \le C r D_0(r)^{\lambda\kappa} D_0'(r).
    \end{split}
\end{align}
Combining \eqref{eqn:QNE^{i, 1}(r)Est}, \eqref{eqn:QNE^{i, 2}(r)Est}, \eqref{eqn:QNE^{i, 3}(r)Est}, and \eqref{eqn:QNE^{i, 4}(r)Est} with the definition of the error $e_1(r)$ from \eqref{eqn:QNerre_1Def} gives
\[
\left\vert e_1(r) \right\vert \le C \left[ r^{\alpha\lambda\kappa - 1} + D_0^{1-\lambda\kappa} D_0'(r) \right],
\]
where we have used \cref{assumptions:QNuC1aAprioriEst} and \cref{assumptions:QNvanishingAssumptions}. This concludes the proof of \eqref{eqn:errQNEste1}.\qed

\subsection*{Proof of \eqref{eqn:errQNEste2}}
From the bounds \eqref{eq:QNE{o,1}}, \eqref{eq:QNE{o,2}} and \eqref{eq:QNE{o,3}}, using the Whitney decomposition from \cref{section:QNWhitneyDecomp} together with \cref{lemma:QNenergyEstCubesHD} and \cref{corollary:QNRHSEnergyControl}, we have the estimates
\begin{align}\label{eqn:QNE^{o, 1}(r)Est}
    \begin{split}
        \left\vert E^{o, 1}(r) \right\vert & \le C \sum_{L \in \mathcal{W}}  \int_{L} \varphi\left( \frac{\vert x \vert}{r}\right) |u|^{2+\kappa} \\
        & \le C \left\| u \right\|_{L^{\infty}\left( L \right)}^{\kappa} \int_{B_r} \varphi\left( \frac{\vert x \vert}{r}\right)|u|^2  \le C D_0(r)^{\lambda\kappa} G(r)  \le C r^2 D_0(r)^{1+\lambda\kappa},
    \end{split}
\end{align}
\begin{align}\label{eqn:QNE^{o, 2}(r)Est}
    \begin{split}
        \left\vert E^{o, 2}(r) \right\vert & \le C \sum_{L \in \mathcal{W}}  \int_{L} \varphi\left( \frac{\vert x \vert}{r}\right)|\nabla u|^{2+\kappa} \\
        & \le C \left\| \nabla u \right\|_{L^{\infty}\left( L \right)}^{\kappa} \int_{B_r} \varphi\left( \frac{\vert x \vert}{r}\right)|\nabla u|^2  \le C D_0(r)^{1+\lambda\kappa},
    \end{split}
\end{align}
\begin{align}\label{eqn:QNE^{o, 3}(r)Est}
    \begin{split}
        \left\vert E^{o, 3}(r) \right\vert & \le -C \sum_{L \in \mathcal{W}} \int_{L \cap B_r} \varphi'\left( \frac{\vert x \vert}{r}\right) |u|^{2+\kappa} \\
        & \le -C \left\| u \right\|_{L^{\infty}\left( L \right)}^{\kappa} \int_{B_r} \varphi'\left( \frac{\vert x \vert}{r}\right)|u|^2  \le C r D_0(r)^{\lambda\kappa} H(r)  \le C r^2 D_0(r)^{1+\lambda\kappa},
    \end{split}
\end{align}
where in the last inequality we have also used \cref{lemma:QNheight-function-integral-inequality}. In particular, combining \eqref{eqn:QNE^{o, 1}(r)Est}, \eqref{eqn:QNE^{o, 2}(r)Est} and \eqref{eqn:QNE^{o, 3}(r)Est} we get that
\begin{align}\label{eqn:QNSumE^{o, k}(r)Est}
    \begin{split}
        \sum_{k=1}^3 \left\vert E^{o, 3}(r) \right\vert & \le C D_0(r)^{1+\lambda\kappa}  \le C r^{\alpha\lambda\kappa} D_0(r),
    \end{split}
\end{align}
where in the last passage we have used \cref{assumptions:QNuC1aAprioriEst} and \cref{assumptions:QNvanishingAssumptions}. Combining the bound \eqref{eqn:QNSumE^{o, k}(r)Est} with the definition of the quantity $F(r)$ from \eqref{eqn:quasilinearF(r)Def} and the outer variation \eqref{eqn:QN-OuterVariationD} gives the equivalence
\begin{align}\label{eqn:QNF(r)EquivD_0(r)Est}
    \begin{split}
        \left( 1- C r^{\alpha\lambda\kappa} \right) D_0(r) \le F(r) \le \left( 1 + C r^{\alpha\lambda\kappa} \right) D_0(r).
    \end{split}
\end{align}
Now, the definition of $A(r)$ from \eqref{eqn:quasilinearFrequencyADef} implies that
\begin{equation}\label{eqn:QNA(r)Est}
        \left\vert A(r) \right\vert \le C r^2 D_0'(r),
\end{equation}
so that, combining the definition of $e_2(r)$ from \eqref{eqn:QNerre_2Def} with the estimates \eqref{eqn:QNSumE^{o, k}(r)Est} and \eqref{eqn:QNA(r)Est} we have
\[
\left\vert e_2(r) \right\vert \le C D_0^{1-\lambda\kappa} D_0'(r),
\]
which is exactly \eqref{eqn:errQNEste2}.\qed

\subsection*{Proof of \eqref{eqn:errQNEste3}}
From the estimate \eqref{eq:QNE{o,4}}, using the Whitney decomposition from \cref{section:QNWhitneyDecomp} together with \cref{lemma:QNenergyEstCubesHD}, we have that
\begin{align}\label{eqn:QNE^{o, 4}(r)Est}
    \begin{split}
        \left\vert E^{o, 4}(r) \right\vert & \le -C \sum_{L \in \mathcal{W}} \int_{L \cap B_r} \varphi'\left( \frac{\vert x \vert}{r}\right)|u| |\nabla u|^{1+\kappa} \\
        & \le -C \left\| \nabla u \right\|_{L^{\infty}\left( L \right)}^{\kappa} \int_{B_r} \varphi'\left( \frac{\vert x \vert}{r}\right)|u| |\nabla u| \\
        & \le C D_0(r)^{\lambda\kappa} \left( r^2 H(r) D_0'(r) \right)^{1/2},
    \end{split}
\end{align}
where in the last inequality we have also used the definitions of $H(r)$ and $D_0(r)$ from \eqref{eqn:quasilinearFrequencyHeightDef} and \eqref{eqn:quasilinearEnergyD0Def} respectively. Moreover, from the definition of $B(r)$ in \eqref{eqn:quasilinearFrequencyBDef}, we also see that
\begin{equation}\label{eqn:QNB(r)Est}
    \left\vert B(r) \right\vert \le C \left( r^2 H(r) D_0'(r) \right)^{1/2}.
\end{equation}
Combining the estimates \eqref{eqn:QNE^{o, 4}(r)Est} and \eqref{eqn:QNB(r)Est} with the definition of the error $e_1(r)$ from \eqref{eqn:QNerre_1Def}, \cref{corollary:QNRHSEnergyControl} (ii) and the equivalence \eqref{eqn:QNF(r)EquivD_0(r)Est}, we have
\begin{equation*}
    \left\vert e_3(r) \right\vert \le C D_0(r)^{1-\lambda\kappa}  D_0'(r),
\end{equation*}
which concludes the proof of \eqref{eqn:errQNEste3}.\qed

\subsection{Proof of \cref{thm:QNfreqmon}}
To begin with, we observe that thanks to the hypothesis $H(r)>0$ the frequency function is well defined. Moreover, combining \cref{lemma:QNLogFrequencyDerivative} and \cref{prop:QNFrequncyErrsEst} we have
\begin{equation}\label{eqn:QNFrequencyDerivativeEst1}
    \frac{d}{dr} \ln{N(r)} \ge \frac{2}{r^2} \frac{1}{F(r) H(r)} \left[A(r)H(r) - B(r)^2 \right] - C \left[ r^{\beta-1} + D_0(r)^{\beta-1} D_0'(r) \right]
\end{equation}
in a neighborhood of $r_0$ and for some constants $C=C(d, \delta_0)>0$ and $\beta=\beta(d, \delta_0)>0$. By a standard Cauchy-Schwarz inequality 
\[
A(r)H(r) - B(r)^2 \ge 0,
\]
so that, from \eqref{eqn:QNF(r)EquivD_0(r)Est} and \eqref{eqn:QNFrequencyDerivativeEst1}
\begin{equation}\label{eqn:QNFrequencyDerivativeEst2}
     \frac{d}{dr} \ln{N(r)} \ge - C \left[ r^{\beta-1} + D_0(r)^{\beta-1} D_0'(r) \right].
\end{equation}
Now let the function $g(r)$ as in \eqref{eqn:QNFrequencyg(r)Def}. Using the estimate \eqref{eqn:QNFrequencyDerivativeEst2} we have
\begin{align*}
    \begin{split}
        \frac{d}{dr} e^{g(r)} N(r) = e^{g(r)} \left[ N'(r) + g'(r) N(r) \right] \ge 0,
    \end{split}
\end{align*}
in a neighborhood of $r_0$, which gives the monotonicity \eqref{thm:QNfreqmon}. To conclude, the condition \eqref{eqn:QNFrequencyg(r)Limit0} follows at once combining the definition of $D_0(r)$ in \eqref{eqn:quasilinearEnergyD0Def} together with \cref{assumptions:QNuC1aAprioriEst} and \cref{assumptions:QNvanishingAssumptions}.\qed

\section{Proof of \cref{theorem:quasilinearUniqueContinuation}}
Without loss of generality we can assume $x_0 = 0$, and let $R=R(d, \delta_0)>0$ be the radius from \cref{thm:QNfreqmon}. In order to prove \cref{theorem:quasilinearUniqueContinuation} it is sufficient to show that
\begin{equation}\label{eqn:uEquiv0BRProof}
    u\equiv 0
\end{equation}
Indeed, if \eqref{eqn:uEquiv0BRProof} holds true, we can can iterate \cref{theorem:quasilinearUniqueContinuation} for any point in $B_R$ and so on, thus covering all $B$.

To begin with, thanks to \eqref{eqn:uEquiv0BRProof} we can suppose that there exists a radius $r_0 \in (0, R)$ such that
\[
H(r_0)>0.
\]
%
Let also
\begin{equation*}
    r_1 \coloneqq \sup \left\{ r \in [0, r_0] \, : \, H(r) = 0 \right\}.
\end{equation*}
Combining the height derivative \eqref{eqn:QN-HDerivative} and the outer variation \eqref{eqn:QN-OuterVariationD} we have that
\[
H'(r) = \frac{d-1}{r} H(r) + 2 D(r) + 2 \sum_{k=1}^4 E^{o,k},
\]
and estimating the errors $E^{o, k}$ using \eqref{eq:QNE{o,1}}, \eqref{eq:QNE{o,2}}, \eqref{eq:QNE{o,3}} and \eqref{eq:QNE{o,4}} 
\begin{align}\label{eqn:QNlndHBoundProof1}
    \begin{split}
        H'(r) & \le \frac{d-1}{r} H(r) + C D_0(r) + D_0(r)^{\beta} \left( r^2 H(r) D_0'(r) \right)^{1/2} \\
        & \le \frac{C}{r} H(r) + r^2 D_0(r)^{\beta} D_0'(r),
    \end{split}
\end{align}
for some constant $C=C(d, \delta_0)$ and all $r \in (r_1, r_0)$, where we have also used \cref{corollary:QNRHSEnergyControl} (ii). Now, from the (almost-)monotonicity of the frequency function \eqref{eqn:QNFrequencyAlmostMonocotone}, we have that
\begin{equation}\label{eqn:QNFrequencyBoundAbove}
    H(r) \ge \widetilde{C} r D_0(r) \quad \text{for all } r \in (r_1, r_0),
\end{equation}
and a constant $\widetilde C \coloneqq C N(r_0)$ where $C=C(d, \delta_0)>0$, thanks to \eqref{eqn:QNFrequencyg(r)Limit0}. In particular, from \eqref{eqn:QNlndHBoundProof1} and \eqref{eqn:QNFrequencyBoundAbove} we see that
\begin{equation}\label{eqn:QNlndHBoundProof2}
    \frac{H'(r)}{H(r)} \le \frac{C}{r} + \widetilde{C} D_0(r)^{\beta-1} D_0'(r) \quad \text{for all } r \in (r_1, r_0).
\end{equation}
Integrating the estimate \eqref{eqn:QNlndHBoundProof2} in the interval $(s, t)$, with $r_1 \le s\le t \le r_0$ we get
\begin{equation}\label{eqn:QNDoublingdHProof}
    \frac{H(t)}{H(s)} \le \widetilde{C} \left( \frac{t}{s} \right)^C \quad \text{for all } r_1 \le s\le t \le r_0,
\end{equation}
and some constants $C = C(d, \delta_0)>0$ and $\widetilde C >0$ depending only on $d, \delta_0$ and $N(r_0)$. In particular, \eqref{eqn:QNDoublingdHProof} implies that $r_1 = 0$ and so $H(r)>0$ and  \cref{thm:QNfreqmon} applies for all $r\in(0,r_0]$.\medskip

Finally, integrating \eqref{eqn:QNDoublingdHProof} on $(0, r)$ for any $r \in (0, r_0)$ we obtain the doubling inequality
\begin{equation}\label{eqn:QNDoublingdL2Proof}
    \fint_{B_r} \varphi\left(\frac{\vert x \vert}{r} \right) u^2 \le \widetilde{C} \fint_{B_{r/2}} \varphi\left(\frac{\vert x \vert}{r}\right) u^2,
\end{equation}
for a constant $\widetilde C >0$ depending only on $d, \delta_0$ and $N(r_0)$, and passing to the limit as $\upsilon \to 1^-$ in \eqref{eqn:QNDoublingdL2Proof} (where $\upsilon$ is the parameter introduced in \eqref{eqn:QNCutoffDef}), we reach a contradiction with \eqref{eqn:QNvanishingAllOrdersHp}. Consequently, we have that
\begin{equation*}
    H(r) = 0 \quad \text{for all } r \in (0, R),
\end{equation*}
which concludes the proof of \eqref{eqn:uEquiv0BRProof} and thus of \cref{theorem:quasilinearUniqueContinuation}.
\qed


\section*{Acknowledgements} LS acknowledges the support of the NSF Career Grant DMS 2044954. LF and BV are supported by the European Research Council (ERC), under the European Union's Horizon 2020 research and innovation program, through the project ERC VAREG - {\em Variational approach to the regularity of the free boundaries} (No.\,853404). LF and BV acknowledge the MIUR Excellence Department Project awarded to the Department of Mathematics, University of Pisa, CUP I57G22000700001. LF is a member of INdAM-GNAMPA. BV acknowledges support from the projects PRA 2022 14 GeoDom (PRA 2022 - Università di Pisa) and MUR-PRIN ``NO3'' (No. 2022R537CS). We warmly thank Roberto Ognibene for the useful discussions on the existing literature about the unique continuation property for elliptic equations.


\bibliographystyle{plain}
\bibliography{FreeBoundary_bib.bib}

\medskip
\small
\begin{flushright}
\noindent 
\verb"lorenzo.ferreri@sns.it"\\
Classe di Scienze, Scuola Normale Superiore\\ 
piazza dei Cavalieri 7, 56126 Pisa (Italy)
\end{flushright}

\begin{flushright}
\noindent 
\verb"lspolaor@ucsd.edu"\\
Department of Mathematics, UC San Diego,\\
 AP\&M, La Jolla, California, 92093, USA
\end{flushright}

\begin{flushright}
\noindent 
\verb"bozhidar.velichkov@unipi.it"\\
Dipartimento di Matematica, Università di Pisa\\ 
largo Bruno Pontecorvo 5, 56127 Pisa (Italy)
\end{flushright}

\end{document}